\DeclareFontFamily{T1}{cbgreek}{}
\DeclareFontShape{T1}{cbgreek}{m}{n}{<-6>  grmn0500 <6-7> grmn0600 <7-8> grmn0700 <8-9> grmn0800 <9-10> grmn0900 <10-12> grmn1000 <12-17> grmn1200 <17-> grmn1728}{}
\DeclareSymbolFont{quadratics}{T1}{cbgreek}{m}{n}
\DeclareMathSymbol{\qoppa}{\mathord}{quadratics}{19}
\DeclareMathSymbol{\Qoppa}{\mathord}{quadratics}{21}
\theoremstyle{theorem}
\newtheorem{thm}{Theorem}[section]
\newtheorem{mainthm}{Theorem}
\newtheorem{lem}[thm]{Lemma}
\newtheorem{prop}[thm]{Proposition}
\newtheorem{cor}[thm]{Corollary}
\newtheorem{add}[thm]{Addendum}
\theoremstyle{definition} 
\newtheorem{example}[thm]{Example}
\newtheorem{qu}[thm]{Question} 
\newtheorem{prob}[thm]{Problem} 
\newtheorem{rmk}[thm]{Remark}
\theoremstyle{remark} 
\newcommand{\C}{\mathbb{C}}
\newcommand{\Z}{\mathbb{Z}}
\newcommand{\R}{\mathbb{R}}
\newcommand{\beq}{\begin{equation*}}
\newcommand{\eeq}{\end{equation*}}
\newcommand{\ov}{\overline}
\newcommand{\wtil}{\widetilde}
\newcommand{\what}{\widehat}
\newcommand{\pair}[1]{\langle #1 \rangle}
\newcommand{\onto}{\twoheadrightarrow}
\newcommand{\SL}{\mathrm{SL}}
\DeclareMathOperator{\SO}{SO}
\DeclareMathOperator{\Diff}{Diff}
\DeclareMathOperator{\im}{im}
\DeclareMathOperator{\Aut}{Aut}
\DeclareMathOperator*{\colim}{colim}
\DeclareMathOperator{\id}{id}
\DeclareMathOperator{\Isom}{Isom}
\DeclareMathOperator{\Homeo}{Homeo}
\DeclareMathOperator{\TopO}{Top/O}
\DeclareMathOperator{\PLO}{PL/O}
\DeclareMathOperator{\OO}{O}
\DeclareMathOperator{\TopPL}{Top/PL}
\DeclareMathOperator{\Top}{Top}
\DeclareMathOperator{\Out}{Out}
\DeclareMathOperator{\Int}{int}
\newcommand{\rest}[2]{#1\bigr\vert_{#2}}
\date{\today}
\begin{document}

\title{Symmetries of exotic aspherical space forms}

\author{Mauricio Bustamante and Bena Tshishiku}

\maketitle

\begin{abstract}
We study finite group actions on smooth manifolds of the form $M\#\Sigma$, where $\Sigma$ is an exotic $n$-sphere and $M$ is a closed aspherical space form. We give a classification result for free actions of finite groups on $M\#\Sigma$ when $M$ is 7-dimensional. We show that if $\Z/p\Z$ acts freely on $T^n\#\Sigma$, then $\Sigma$ is divisible by $p$ in the group of homotopy spheres. When $M$ is hyperbolic, we give examples $M\#\Sigma$ that admit no nontrivial smooth action of a finite group, even though $\Isom(M)$ is arbitrarily large. Our proofs combine geometric and topological rigidity results with smoothing theory and computations with the Atiyah--Hirzebruch spectral sequence. 
\end{abstract}

\section{Introduction}

In this paper we study how smooth Lie group actions on a manifold $M$ depend on the smooth structure. The manifolds we consider are space forms, i.e.\ manifolds that admit a complete Riemannian metric with constant sectional curvature. 

The most classical case is when $M$ is the $n$-sphere $S^n$. A smooth manifold $\Sigma$ that is homeomorphic but not diffeomorphic to $S^n$ is called an exotic sphere. Several structural results about Lie group actions on exotic spheres are known by the work of Hsiang--Hsiang \cite{hsiang,hsiang-hsiang}. For example, the standard smooth structure on $S^n$ is characterized uniquely by its faithful action of $\SO(n+1)$. Furthermore, exotic spheres $\Sigma$ that bound parallelizable manifolds generally have more symmetry than those that do not; compare \cite[Main Thm.]{hsiang} and \cite[Thm.\ 1]{hsiang-hsiang}. 

This paper focuses on closed aspherical space forms $M$ (flat and hyperbolic manifolds), smooth structures $M\#\Sigma$ obtained by connected sum with an exotic sphere $\Sigma$, and finite group actions. This case is largely unexplored. The only prior result is the following theorem of Farrell--Jones \cite{FJ-nielsen}. In the statement, $\Theta_n$ denotes the group of homotopy $n$-spheres. When $n\ge 5$, every nontrivial element in $\Theta_n$ can be represented by an exotic $n$-sphere \cite{kervaire-milnor}.

\begin{thm}[Farrell--Jones]\label{thm:FJ}
Fix $n$ and assume $\Theta_n$ contains an element of order $\ge 3$. Then there exists a hyperbolic $n$-manifold $M$ and an exotic $n$-sphere $\Sigma$ such that $M$ has an orientation-reversing involution, but $M\#\Sigma$ has no orientation-reversing involution. 
\end{thm}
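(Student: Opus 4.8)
\emph{Proof idea.} Suppose toward a contradiction that $M\#\Sigma$ admits an orientation-reversing diffeomorphism $g$; this is more than enough, since an orientation-reversing involution is in particular such a diffeomorphism. The hypothesis forces $n\ge 7$ (as $\Theta_5=\Theta_6=0$ and $\Theta_7=\Z/28$), so $M$ is closed aspherical of dimension $\ge 5$ and both rigidity and smoothing theory apply. Since $\Sigma$ is homeomorphic to $S^n$, the manifold $M\#\Sigma$ is homeomorphic to $M$, so $g$ may be regarded as a self-homeomorphism of the topological manifold underlying $M$. The outer automorphism of $\pi_1 M$ induced by $g$ is realized by an isometry $\phi$ of $M$ by Mostow rigidity, and since $M$ is aspherical $g$ is homotopic to $\phi$; as $g$ reverses orientation, so does $\phi$. (Farrell--Jones topological rigidity would even make $g$ topologically isotopic to $\phi$, but homotopy suffices below.)

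Now invoke smoothing theory. Fix a degree-one collapse map $c\colon M\to S^n$. By Kirby--Siebenmann, the concordance classes of smooth structures on the topological manifold underlying $M$ form a torsor over $[M,\TopO]$; taking the standard smoothing $\sigma_0$ as origin, the smooth manifold $M\#\Sigma$ corresponds to $\sigma_0+c^*[\Sigma]$, where $[\Sigma]\in\Theta_n\cong\pi_n(\TopO)=[S^n,\TopO]$. Because $[-,\TopO]$ is homotopy-invariant and this correspondence is natural, homotopic self-homeomorphisms act identically on concordance classes of smoothings; so $\phi$, like $g$, fixes the class $\sigma_0+c^*[\Sigma]$. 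Since $\phi$ is a diffeomorphism of the standard smooth manifold $M$ it fixes $\sigma_0$ and respects the torsor structure, hence $\phi^*c^*[\Sigma]=c^*[\Sigma]$. On the other hand $c\circ\phi$ has degree $-1$, so by Hopf's theorem it is homotopic to $r\circ c$ for a reflection $r$ of $S^n$; as precomposition with a degree $-1$ self-map of $S^n$ acts by $-1$ on $\pi_n$ of any space, $\phi^*c^*[\Sigma]=(r\circ c)^*[\Sigma]=-c^*[\Sigma]$. Combining, $2\,c^*[\Sigma]=0$ in $[M,\TopO]$. It therefore suffices to exhibit a hyperbolic $n$-manifold $M$ with an orientation-reversing involution, and an exotic sphere $\Sigma$, for which $2\,c^*[\Sigma]\ne 0$.

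The kernel of $c^*\colon\Theta_n\to[M,\TopO]$ equals the image of the Atiyah--Hirzebruch differentials into the top line $E^{n,0}=H^n(M;\Theta_n)\cong\Theta_n$ of the spectral sequence computing $[M,\TopO]$; their sources are subquotients of the groups $H^{n-r}(M;\pi_{r-1}\TopO)$. For $n=7$ the only possibly nonzero source comes from $H^3(M;\pi_3\TopO)=H^3(M;\Z/2)$ (since $\Theta_5=\Theta_6=0$), so $\ker c^*$ is $2$-torsion and the image of $c^*$ contains the subgroup $\Z/7\subset\Theta_7=\Z/28$; thus \emph{any} orientable hyperbolic $7$-manifold with an orientation-reversing involution works --- for instance the double of an orientable hyperbolic $7$-manifold with totally geodesic boundary (which exists by Gromov--Piatetski-Shapiro), equipped with the reflection exchanging the two halves --- upon taking $\Sigma$ to generate this $\Z/7$. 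For general $n$ one instead takes $M$ to be a hyperbolic $\Z/p$-homology sphere, where $p$ is an odd prime dividing $|\Theta_n|$ (or $p=2$ if $\Theta_n$ is $2$-primary with an element of order $\ge 4$): then the above differentials vanish $p$-locally, $c^*$ is injective on the $p$-primary part of $\Theta_n$, and $\Sigma$ can be chosen of order $\ge 3$ there.

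\textbf{The main obstacle} is the construction in the last paragraph: one must produce a hyperbolic $n$-manifold carrying \emph{both} an orientation-reversing involution --- e.g.\ an isometric reflection, or the covering involution of the orientation double cover of a non-orientable hyperbolic manifold --- \emph{and} sufficiently small $\Z/p$-homology to force $c^*$ to detect an element of order $\ge 3$. This is where arithmetic input is needed; the rest of the argument is formal, given Mostow rigidity, the Kirby--Siebenmann smoothing correspondence together with its naturality, and Hopf's theorem.
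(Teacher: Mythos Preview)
The paper does not give its own proof of this theorem; it is quoted from Farrell--Jones \cite{FJ-nielsen}, with only the one-line summary that they arrange for the image of $\Diff(M\#\Sigma)\to\Out(\pi_1)$ to lie in $\Out^+$. Your formal reduction --- Mostow rigidity plus the smoothing-theory/homotopy argument yielding $2\,c^*[\Sigma]=0$ --- is exactly how that conclusion is reached, and this part is correct.

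The gap you yourself flag is real, but you are looking in the wrong direction to close it. You do not need hyperbolic $\Z/p$-homology spheres, and it is unclear such manifolds can be produced with an orientation-reversing involution. What Farrell--Jones actually use, and what the paper develops independently in Proposition~\ref{prop:inertia} and Lemma~\ref{lem:splitting}, is \emph{stable parallelizability}: if $M$ is stably parallelizable then the top cell of $\Sigma^kM$ splits off, so $c^*:\Theta_n\to[M,\TopO]$ is a split injection with no spectral-sequence analysis needed. Then any $\Sigma$ of order $\ge 3$ has $2\,c^*[\Sigma]\neq 0$, and you are done. The construction of a closed hyperbolic $n$-manifold that is stably parallelizable and carries an orientation-reversing isometric involution is the arithmetic input in \cite{FJ-nielsen} (using Sullivan's theorem that every closed hyperbolic manifold has a stably parallelizable finite cover); this is far more tractable than controlling mod-$p$ cohomology. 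Your Atiyah--Hirzebruch approach is not wrong in spirit, but it replaces a one-line splitting argument with a difficult and possibly unavailable construction.
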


Our main results (Theorems \ref{thm:dim7}, \ref{thm:tori}, \ref{thm:asymmetric}) identify new phenomena and prove structural results in basic cases.

\subsection{Standard actions and the divisibility property} 
For a finite group $G$ acting on $M$ there is an obvious way to build an action on $M\#\Sigma$ for certain $\Sigma$. Specifically, for any exotic sphere $\what\Sigma$, there is an action of $G$ on 
\[M\#\underbrace{\what\Sigma\#\cdots\#\what\Sigma}_{|G|\text{ copies }}\] obtained by performing the connected sum equivariantly along the $G$-orbit of a point in $M$ with trivial stabilizer. We call an action $G\curvearrowright M\#\Sigma$ \emph{standard} if it is equivalent to this construction (see \S\ref{sec:standard} for more details on this notion). 

Our first result gives a class of manifolds and groups for which this is the only construction. 

\begin{mainthm}[Action classification, dimension 7]\label{thm:dim7} 
Let $M$ be a $7$-dimensional closed aspherical space form and let $\Sigma$ be an exotic $7$-sphere. Assume that $G$ is a finite group with odd order. 
Then every free, orientation-preserving action of $G$ on $M\#\Sigma$ is standard. 
\end{mainthm}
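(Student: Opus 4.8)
The plan is to fix a free, orientation-preserving action $G \curvearrowright M\#\Sigma$ and to compare it with the standard model built on $M$ itself. The first step is to descend to quotients: since the action is free, $N := (M\#\Sigma)/G$ is a closed smooth $7$-manifold with $\pi_1(N)$ an extension of $G$ by $\pi_1(M)$, hence a virtually flat or virtually hyperbolic group, and in either case a torsion-free-by-finite group satisfying the Farrell--Jones conjecture. The underlying topological manifold of $M\#\Sigma$ is homeomorphic to $M$ (connected sum with a homotopy sphere does not change the homeomorphism type in dimensions $\ge 5$), so the action gives a free action of $G$ on $M$ up to homeomorphism; by topological rigidity (Farrell--Jones) this action is topologically conjugate to an isometric one on a space form, and in particular $N$ is homeomorphic to an aspherical space form $N_0 = M/G$. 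The heart of the matter is then to understand which smoothings of $N_0$ arise this way, i.e.\ to compute the relevant structure set, and to see that the smoothing of $N$ coming from $M\#\Sigma$ is exactly the one obtained by the equivariant connected-sum construction.

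The second step is smoothing theory. The set of smooth structures on $N_0$ (concordance classes) is a torsor over $[N_0, \TopO]$, and since $\pi_i(\TopO)$ is concentrated in degrees where it agrees with $\Theta_i$ in the relevant range, for a $7$-manifold this group is controlled by $H^7(N_0;\Theta_7) \oplus (\text{lower terms})$, which one analyzes with the Atiyah--Hirzebruch spectral sequence for $[N_0,\TopO]$. Here the odd-order hypothesis on $G$ enters decisively: $\Theta_7 \cong \Z/28$, and more importantly the transfer argument along the $|G|$-fold cover $M \to N_0$ shows that the smoothing class of $N$ pulled back to $M$ must be $|G|$ times some class; combined with an argument (again AHSS-based, plus the structure of $\pi_1(N_0)$ acting on the coefficients) that the map $H^*(N_0) \to H^*(M)$ is injective after inverting $|G|$ in the relevant degrees, one pins down the concordance class of the smoothing of $N$ uniquely in terms of $\Sigma$ and the combinatorics of the $G$-orbit. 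This is precisely the content of "standard."

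The third step is to promote concordance of smoothings to an equivariant diffeomorphism $M\#\Sigma \cong M\# (\what\Sigma^{\# |G|})$ intertwining the two $G$-actions. Concordant smoothings on a manifold of dimension $\ge 6$ are diffeomorphic (rel a point if one is careful), so one gets a diffeomorphism of quotients $N \cong N_{\mathrm{std}}$; the issue is to lift it to the universal covers compatibly with $G$, which requires knowing that the diffeomorphism is homotopic to one respecting the covering, and this follows from homotopy rigidity of aspherical manifolds (the diffeomorphism is homotopic to a map covered by the identity on $\pi_1$, after composing with an element of $\Out(\pi_1 N)$ realized geometrically). Care is needed because $\Diff$ versus $\Homeo$ discrepancies and the nonuniqueness of the lift must be absorbed; here the classification of free isometric actions on space forms (finitely many up to conjugacy, and each rigid) closes the loop.

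The main obstacle I expect is the second step: controlling the structure set / smoothing set of the quotient $N_0$ and showing the odd-order hypothesis forces the smoothing to be "divisible" in the precise sense that matches the standard construction. Concretely, one must run the Atiyah--Hirzebruch spectral sequence for $[N_0, \TopO]$, identify all differentials and extension problems in total degree $0$ (equivalently $H^7$ with $\Theta_7$ coefficients plus $H^3$ with $\pi_3(\TopO)$, etc.), and carry out the transfer computation along the cover $M \to N_0$; the interaction between the $\pi_1(N_0)$-module structure on the coefficient groups $\Theta_*$ (which can be nontrivial when the action is only orientation-preserving, not necessarily acting trivially on these groups) and the odd-order condition is the delicate point. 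Everything else — topological rigidity, the passage from concordance to diffeomorphism, and the homotopy-theoretic lifting — is standard input that I would cite rather than reprove.
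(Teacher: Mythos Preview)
Your overall strategy matches the paper's: rigidity to produce an isometric $G$-action on $M$ together with an equivariant homeomorphism $M\#\Sigma\to M$ (so that the smoothing $M\#\Sigma$ lies in the image of $\pi^*:[M/G,\TopO]\to[M,\TopO]$), then the Atiyah--Hirzebruch spectral sequence to analyze $\pi^*$, then the characterization of standard actions via the quotient (the paper's Lemma~\ref{lem:standard-actions}) to upgrade a diffeomorphism of quotients to an equivariant one.

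However, your description of how the odd-order hypothesis enters is off, and this is the one place where the argument could go wrong if executed as you describe. It is \emph{not} that transfer forces ``the smoothing class of $N$ pulled back to $M$ to be $|G|$ times some class''; that pullback is $M\#\Sigma$ by construction, not by transfer. The actual mechanism is cleaner than you anticipate. Since $\pi_k(\TopO)=0$ for $k\in\{0,1,2,4,5,6\}$, the AHSS for a closed $7$-manifold $W$ degenerates to an exact sequence
\[\Theta_7\xrightarrow{c}[W,\TopO]\xrightarrow{q}H^3(W;\Z/2\Z)\to 0,\]
and $\pi$ induces a map of such sequences. The odd-order hypothesis is used exactly once: to show that $\pi^*:H^3(M/G;\Z/2\Z)\to H^3(M;\Z/2\Z)$ is injective (transfer followed by $\pi^*$ is multiplication by $|G|$, which is invertible mod~$2$). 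Since $q(M\#\Sigma)=0$ and $M\#\Sigma=\pi^*(x)$, this injectivity forces $q(x)=0$, whence $x=c(\what\Sigma)$, i.e.\ $(M\#\Sigma)/G\cong(M/G)\#\what\Sigma$. No extension problems need to be solved, and there is no nontrivial $\pi_1$-action on the coefficient groups (the cohomology theory associated to the infinite loop space $\TopO$ is untwisted), so that concern is a red herring. The extension issue you flag only becomes relevant for the Addendum (arbitrary $|G|$), where the paper appeals to a separate splitting result.
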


If $M$ is hyperbolic and $\Isom(M)$ acts freely on $M$, then every finite $G<\Homeo(M)$ acts freely on $M$ by Cappell--Lubotzky--Weinberger \cite[Thm.\ 1.5]{CLW}. For such $M$, the statement of Theorem \ref{thm:dim7} can be strengthened by dropping the word ``free". Every finite group $G$ acts freely by orientation-preserving isometries on some hyperbolic manifold by Belolipetsky--Lubotzky \cite{belolipetsky-lubotzky}, so there are plenty of examples to which Theorem \ref{thm:dim7} applies. 

As a consequence of Theorem \ref{thm:dim7}, if $G$ acts on $M\#\Sigma$ (with the assumptions of the theorem), then $\Sigma$ is divisible by $|G|$ in the group $\Theta_n$ of homotopy $n$-spheres (this implication uses the fact that inertia group vanishes for aspherical space forms -- see Proposition \ref{prop:inertia} and the paragraph after the proof). More generally, we say a group action of $G$ on $M\#\Sigma$ has the \emph{divisibility property} if $\Sigma$ is divisible by $|G|$ in $\Theta_n$ (i.e.\ there exists $\what\Sigma$ so that $\Sigma=\what\Sigma^{\#|G|}$ in $\Theta_n$). This is generally weaker than the action being standard; see Example \ref{ex:divisibility}. 

Without the assumption that $|G|$ is odd in Theorem \ref{thm:dim7}, we can prove the following.
\begin{add}\label{add:hyperbolic7}
Let $M$ and $\Sigma$ be as in the statement of Theorem \ref{thm:dim7}. Then every free, orientation-preserving action of a finite group $G$ on $M\#\Sigma$ has the divisibility property.
\end{add}
For actions on exotic tori we obtain a similar result but with no constraint on the dimension. 
\begin{mainthm}[Actions on exotic $n$-tori]\label{thm:tori}
Fix $n\ge5$ and let $T^n=(S^1)^n$ denote the $n$-torus. Let $\Sigma$ be an exotic $n$-sphere. Assume that $G$ is a cyclic group of prime order. Then any free, orientation-preserving action of $G$ on $T^n\#\Sigma$ has the divisibility property. 
\end{mainthm}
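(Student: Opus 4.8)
The plan is to combine topological rigidity with smoothing theory to reduce the assertion to a divisibility statement about a single stable homotopy class, and then to settle that statement with an Atiyah--Hirzebruch spectral sequence computation. First I would set up the geometry. Since $\Sigma$ is a homotopy sphere and $n\ge5$, the manifold $T^n\#\Sigma$ is homeomorphic to $T^n$, and in particular its universal cover is $\R^n$. Given a free, orientation-preserving action of $G=\Z/p\Z$, the quotient $X=(T^n\#\Sigma)/G$ is a closed oriented aspherical manifold whose fundamental group sits in an extension $1\to\Z^n\to\pi_1(X)\to\Z/p\Z\to1$ and is torsion-free (because the action is free), hence a Bieberbach group. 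By topological rigidity of flat manifolds (Farrell--Hsiang), $X$ is homeomorphic to a flat manifold $N$; under this identification the covering $T^n\#\Sigma\to X$ becomes the flat $\Z/p\Z$-cover $f\colon\wtil N\to N$, with $\wtil N$ a flat torus, i.e.\ the standard smooth $T^n$. So the $G$-action is topologically conjugate to a standard affine action.

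Next I would bring in smoothing theory. By the Kirby--Siebenmann classification, smooth structures on a topological manifold of dimension $\ge5$ form a torsor over $[-,\TopO]$, naturally in covering maps. Taking $N$ with its flat smooth structure as basepoint, the smooth structure on $X$ is a class $\tau\in[N,\TopO]$, and the smooth structure on $T^n\#\Sigma$ is its pullback $f^*\tau$. On the other hand, connected sum with $\Sigma$ inside a coordinate ball only alters the smoothing inside that ball, so the smooth structure on $T^n\#\Sigma$ equals $c^*[\Sigma]$, where $c\colon T^n\to S^n$ collapses the complement of the ball and $[\Sigma]\in\pi_n(\TopO)\cong\Theta_n$ (here $n\ge5$ is used). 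Thus the problem reduces to: if
\[
f^*\tau \;=\; c^*[\Sigma]\qquad\text{in }[T^n,\TopO],
\]
then $[\Sigma]\in p\,\Theta_n$.

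To extract divisibility I would use the stable splitting $\Sigma^\infty T^n_+\simeq\bigvee_k\binom nk\mathbb S^k$, which gives a projection $P\colon[T^n,\TopO]\to\pi_n(\TopO)=\Theta_n$ onto the top cell with $P\bigl(c^*[\Sigma]\bigr)=[\Sigma]$; concretely $P(\beta)=\beta\circ i$ for the top-cell section $i\colon\mathbb S^n\to\Sigma^\infty T^n_+$ of $c$. Applying $P$ gives $[\Sigma]=\tau\circ\eta$ with $\eta:=\Sigma^\infty f_+\circ i\in\pi_n^{s}(N_+)$. Because $f$ has degree $p$, the image of $\eta$ in $H_n(N)$ is $p[N]$; playing this against the covering transfer $\mathrm{tr}_f$ (which satisfies $\Sigma^\infty f_+\circ\mathrm{tr}_f=p\cdot\id$ and $\mathrm{tr}_f\circ\Sigma^\infty f_+=\sum_{g\in G}\Sigma^\infty g_+$, and noting that deck transformations, being orientation-preserving, commute with $c$ up to homotopy by the Hopf classification of $[T^n,S^n]$) shows that $\eta$ is divisible by $p$ modulo the Atiyah--Hirzebruch filtration $F_{n-1}\pi_n^{s}(N_+)$. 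Feeding the equation back down the cover shows moreover that $c^*[\Sigma]$ restricts trivially to the $(n-1)$-skeleton of $T^n$, whence $\tau|_{N^{(n-1)}}$ restricts trivially to the $p$-fold cover $f^{-1}(N^{(n-1)})\to N^{(n-1)}$ and is therefore annihilated by $p$ (kernels of pullback along a $p$-fold cover are $p$-torsion, by the transfer). Combining these gives $[\Sigma]\in p\,\Theta_n+\Theta_n[p]$.

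The main obstacle, I expect, is the last step: removing the $p$-torsion error term to obtain $[\Sigma]\in p\,\Theta_n$ on the nose, which is exactly where the primality of $p$ enters. I would localize at $p$ — legitimate, since for $\ell\ne p$ the $\ell$-primary part of $\Theta_n$ is already $p$-divisible — and run the Atiyah--Hirzebruch spectral sequence $H^s(N;\pi_s(\TopO))\Rightarrow[N,\TopO]$, using that $p$-torsion in $H^*(N)$ is bounded (it arises from $H^{>0}(\Z/p\Z;H^*(T^n))$, hence is killed by $p$) and that $\pi_s(\TopO)$ is small in the relevant range. The target statement is that $\tau$ may be taken to lie in the image of $c_N^*\colon\Theta_n\to[N,\TopO]$ for the collapse $c_N\colon N\to S^n$ — equivalently, that $X$ is diffeomorphic to $N\#\what\Sigma$ for some $\what\Sigma\in\Theta_n$. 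Since $c_N\circ f$ and the degree-$p$ self-map of $S^n$ composed with $c$ are homotopic, one then gets $f^*\tau=f^*c_N^*\what\Sigma=c^*(p\what\Sigma)$, so $[\Sigma]=p\what\Sigma$ and $T^n\#\Sigma\cong T^n\#\what\Sigma^{\#p}$, as desired. I anticipate that the rigidity input and the formal smoothing theory are routine, and that essentially all of the difficulty is concentrated in this final spectral-sequence bookkeeping.
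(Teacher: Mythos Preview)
Your setup through the rigidity reduction is correct and matches the paper: Bieberbach plus Farrell--Hsiang give a flat model $N$ for the quotient and a covering $f\colon T^n\to N$, and smoothing theory yields $f^*\tau=c^*[\Sigma]$ in $[T^n,\TopO]$. The transfer argument leading to $[\Sigma]\in p\,\Theta_n+\Theta_n[p]$ is plausible, though as written it tacitly uses that $N$ has a stable top-cell splitting, i.e.\ that $N$ is stably parallelizable; you do not address this, and it is a nontrivial input (the paper proves it via Propositions \ref{prop:mapping-torus} and \ref{prop:parallelizable}).

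The genuine gap is your final step. Your ``target statement'' is that $\tau$ lies in the image of $c_N^*\colon\Theta_n\to[N,\TopO]$, equivalently that the quotient is diffeomorphic to $N\#\what\Sigma$. By Lemma \ref{lem:standard-actions} this would show the action is \emph{standard}, which is strictly stronger than the divisibility property, and it is \emph{false} in general. Example \ref{ex:divisibility} exhibits a free orientation-preserving $\Z/7\Z$-action on $T^8\#\Omega$ (so Theorem \ref{thm:tori} applies) whose quotient is $[(T^7\#\Sigma)\times S^1]\#\Omega$, which is not diffeomorphic to $T^8\#\what\Sigma$ for any $\what\Sigma\in\Theta_8$. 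Thus $\tau\notin\im(c_N^*)$ in that example, and no amount of Atiyah--Hirzebruch bookkeeping can establish your target. This is precisely the extension problem the paper says it cannot resolve.

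The paper sidesteps the issue entirely. Rather than locating $\tau$ in a filtration of $[N,\TopO]$, it constructs \emph{compatible} top-cell retractions $p_{T^n}^*\colon[T^n,\TopO]\to\Theta_n$ and $p_Q^*\colon[N,\TopO]\to\Theta_n$ satisfying $p_{T^n}^*\circ f^*=\mu_p\circ p_Q^*$ (Corollary \ref{cor:splitting}). Applying $p_{T^n}^*$ to $f^*\tau=c^*[\Sigma]$ gives $[\Sigma]=p\cdot p_Q^*(\tau)$ directly, with no need to control the lower-filtration part of $\tau$. The construction of these compatible splittings is the real content: it uses that $N$ is a mapping torus $T^{n-1}\to N\to S^1$ with order-$p$ monodromy (Proposition \ref{prop:mapping-torus}, where primality of $p$ is used), hence parallelizable (Proposition \ref{prop:parallelizable}), and then builds an explicit $G$-equivariant framed embedding of $T^n$ via a branched cover of Euclidean space (Proposition \ref{prop:compatible-splitting}). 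This geometric input is what is missing from your outline; the abstract transfer and spectral-sequence manipulations you propose do not produce it.
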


\subsection{Symmetry constant and asymmetric manifolds} 

For an aspherical manifold $N$, the \emph{symmetry constant} $s(N)$ is defined as the maximum size of a finite subgroup of $\Diff(N)$. 

By a result of Borel \cite{borel-isometry}, if $\pi_1(N)$ has trivial center, then $s(N)\le |\Out(\pi_1(N))|$. When $M$ is a hyperbolic manifold of dimension $\ge3$, then 
\[s(M)=|\Isom(M)|=|\Out(\pi_1(M))|.\]
The second equality follows from Mostow rigidity. 
Theorem \ref{thm:FJ} of Farrell--Jones \cite{FJ-nielsen} gives examples of hyperbolic manifolds $M$ and exotic spheres $\Sigma$ such that 
\[s(M\#\Sigma)\le \frac{1}{2}|\Isom(M)|.\] This leads one to wonder how small $s(M\#\Sigma)$ can be relative to the size of $|\Isom(M)|$. The most extreme possibility would be a positive answer the following question. 

\begin{qu}\label{q:asymmetric}
Fix $n$. Given $d\ge1$, does there exist a hyperbolic $n$-manifold $M$ and an exotic $n$-sphere $\Sigma$ such that $|\Isom(M)|\ge d$ and $s(M\#\Sigma)=1$? 
\end{qu}

The condition $s(N)=1$ is equivalent to the statement that $N$ is  \emph{(smoothly) asymmetric}, i.e.\ $\Diff(N)$ does not contain a nontrivial finite subgroup. Theorem \ref{thm:asymmetric} provides an answer to Question \ref{q:asymmetric} and also answers Question 3 of \cite{BT-hyperbolic} (note that the definition of $s(M\#\Sigma)$ used in \cite{BT-hyperbolic} differs from the one above by a factor of $|\Isom(M)|$). 

\begin{mainthm}[Asymmetric smoothings of hyperbolic manifolds]\label{thm:asymmetric}
For every $n_0\geq 5$ and $d\ge1$, there exists $n\ge n_0$, a closed hyperbolic $n$-manifold $M$, and an exotic sphere $\Sigma\in\Theta_n$, so that $|\Isom(M)|\ge d$ and $M\#\Sigma$ is (smoothly) asymmetric. 
\end{mainthm}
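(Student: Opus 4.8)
The plan is to build $M$ and $\Sigma$ so that two competing sources of rigidity pinch out all finite symmetry. First I would fix $d$ and $n_0$, and use Belolipetsky--Lubotzky \cite{belolipetsky-lubotzky} to produce, for a suitable $n\ge n_0$, a closed hyperbolic $n$-manifold $M_0$ whose isometry group has order at least $d$ (one can even demand $\Isom(M_0)$ acts freely). By passing to a finite cover dominated by $M_0$, or by taking further commensurable manifolds, I want to arrange that the ambient dimension $n$ can be chosen from an infinite set of admissible values, since the key input below — existence of an exotic sphere with prescribed divisibility behaviour — constrains $n$. The target is to choose $\Sigma\in\Theta_n$ so that $M\#\Sigma$ is asymmetric while $|\Isom(M)|\ge d$; note that $\Isom(M)$ still acts on $M$ but \emph{not} on $M\#\Sigma$, which is exactly the phenomenon of Theorem \ref{thm:FJ}, pushed to the extreme.

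The core mechanism is the interplay between topological rigidity and smoothing theory. Any finite $G<\Diff(M\#\Sigma)$ gives, via topological rigidity (Borel conjecture for these aspherical manifolds, known by Farrell--Jones) a conjugacy with an action on $M$ itself; combined with Mostow rigidity this action is, up to conjugacy, by isometries of $M$, so $G<\Isom(M)$. For such a $G$, a homeomorphism $M\#\Sigma\to M$ that is $G$-equivariant up to isotopy forces, by the concentration-of-exotic-structure argument (inertia groups of aspherical space forms vanish, Proposition \ref{prop:inertia}), the smoothing invariant of $M\#\Sigma$ — which is the class of $\Sigma$ transported into $[M,\Top/\On]$ via the classifying map $M\to \pt\hookrightarrow M$ — to be $G$-invariant and in fact to satisfy a divisibility constraint indexed by the orbit structure of $G$ on $M$. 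So the strategy is: choose $M$ so that $\Isom(M)$ is large but every nontrivial finite subgroup $G$ acts on $M$ with the property that invariance of the smoothing class would force $\Sigma$ to be divisible by $|G|$ in $\Theta_n$ (as in Theorem \ref{thm:tori} and Addendum \ref{add:hyperbolic7} in their respective settings, here one needs the AHSS computation of $[M,\Top/\On]\cong H^n(M;\Theta_n)\oplus(\text{lower terms})$ to control what ``invariant'' means); then pick $\Sigma$ to be an element of $\Theta_n$ that is \emph{not} divisible by $p$ for \emph{any} prime $p\le |\Isom(M)|$. Since $\Theta_n$ is finite, for infinitely many $n$ it has an element of order $p$ for a prime $p$, and more carefully one wants an element whose order is divisible by a prime $p$ with $p\nmid |\Isom(M)|$ — but that is the wrong inequality, so instead one should engineer $M$ (via the flexibility in Belolipetsky--Lubotzky, controlling $|\Isom(M)|$ to be, say, a power of a single prime $q$) and then choose $\Sigma$ of order a prime $p\ne q$, so that no cyclic subgroup of $\Isom(M)$ of prime order $r$ can divide $\Sigma$. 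With no nontrivial finite $G$ able to act, $M\#\Sigma$ is asymmetric.

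More precisely, the argument I would write has these steps, in order. Step 1: dimension bookkeeping — identify an infinite set $\mathcal{N}\subseteq\{n\ge n_0\}$ such that $\Theta_n$ contains an element of prime order $p_n$; this uses known lower bounds on $|\Theta_n|$ (e.g.\ the image of the Kervaire--Milnor $bP_{n+1}$ subgroup, or $J$-homomorphism image) — in dimensions $\equiv 1,2\pmod 8$ or $n=4k-1$ one gets large cyclic summands. Step 2: for $n\in\mathcal N$, produce via Belolipetsky--Lubotzky \cite{belolipetsky-lubotzky} a closed hyperbolic $M$ with $|\Isom(M)|\ge d$ and — this is the delicate point — with $\Isom(M)$ of order coprime to $p_n$, or at least such that $\Isom(M)$ has no element of order $p_n$; arrange also (Cappell--Lubotzky--Weinberger \cite{CLW}) that every finite group of homeomorphisms of $M$ acts freely. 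Step 3: set $\Sigma$ equal to an order-$p_n$ element of $\Theta_n$. Step 4: suppose $G<\Diff(M\#\Sigma)$ is finite nontrivial and derive a contradiction: topological rigidity + Mostow conjugates $G$ into $\Isom(M)$ acting freely, pass to a prime-order cyclic subgroup $\Z/r\Z<G$, run the smoothing-theory/AHSS argument (as in the proof of Theorem \ref{thm:tori}) to conclude $r\mid [\Sigma]$ in $\Theta_n$; since $[\Sigma]$ has order $p_n$, this forces $r=p_n$, contradicting Step 2. Hence $G=1$ and $M\#\Sigma$ is asymmetric.

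The main obstacle is Step 2 combined with the smoothing-theory input in Step 4: I need enough control over \emph{which} primes can divide $|\Isom(M)|$ while keeping $|\Isom(M)|\ge d$, and simultaneously I need the divisibility conclusion ``$r\mid[\Sigma]$'' to hold for the hyperbolic $M$ at hand — this requires that the relevant piece of $[M,\Top/\On]$ detecting $\Sigma$ behaves like $H^n(M;\Theta_n)\cong\Theta_n$ with the $\Z/r\Z$-action being multiplication by the orbit count, i.e.\ that the lower filtration terms of the AHSS do not interfere and that the $\Z/r\Z$-action on $M$ being free makes the transfer argument go through. Concretely one computes the map $H^n(M;\Theta_n)^{\Z/r}\leftarrow H^n(M/(\Z/r);\Theta_n)$ and notes it is multiplication by $r$ up to units, which is precisely where ``$r$ divides $\Sigma$'' comes from; making sure this survives to the associated graded of $[M,\Top/\On]$, rather than just $H^n$, is the technical heart and is exactly the computation carried out for the torus in Theorem \ref{thm:tori} — here one needs the analogue for hyperbolic $M$, which should follow since $M$ is a closed aspherical $n$-manifold and the top cohomology is the only place an order-$p_n$ class of $\Theta_n$ (with $p_n$ a large prime not dividing the orders of $\Theta_k$ for $k<n$) can live.
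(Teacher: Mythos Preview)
Your divisibility arithmetic in Step~4 is backwards, and this breaks the argument. If $\Sigma\in\Theta_n$ has prime order $p_n$ (with the $p_n$-primary part of $\Theta_n$ cyclic of order $p_n$, say), then $\Sigma$ \emph{is} divisible by every integer coprime to $p_n$ and is \emph{not} divisible by $p_n$. So if you make $|\Isom(M)|$ a power of a prime $q\ne p_n$ as in your Step~2, then any prime-order $\Z/r\Z<\Isom(M)$ has $r=q\ne p_n$, the conclusion ``$\Sigma$ is divisible by $r$'' is \emph{satisfied}, and no contradiction arises. Your implication ``$[\Sigma]$ has order $p_n$, hence $r=p_n$'' is false; divisibility of a prime-order element by $r$ rules out $r=p_n$, it does not force it. The correct move is the opposite one: take $\Isom(M)$ to be a $p$-group with $p$ \emph{equal} to the prime controlling the order of $\Sigma$. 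This is exactly what the paper does: it picks $\Sigma$ generating $(bP_{n+1})_{(p)}\cong\Z/p^a\Z$ and arranges $\Isom(M)=\Isom^+(M)$ to be a $p$-group in which every nontrivial element has order divisible by $p^a$; then for any nontrivial cyclic $G<\Isom(M)$ the transfer $\pi^*$ on the relevant group is multiplication by $|G|$, which is divisible by $p^a$ and hence annihilates the image of $\Sigma$, contradicting $\pi^*(x)=M\#\Sigma$.

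Your final parenthetical, on the other hand, is precisely the technical key. Choosing the prime so that it does not divide $|\pi_k(\TopO)|$ for $k<n$ is what collapses the Atiyah--Hirzebruch spectral sequence to a single diagonal term and eliminates the extension problems you correctly flag as the main obstacle. The paper implements this via the $bP$-factor $B$ of $\TopO_{(p)}$ from Theorem~\ref{thm:local-splitting}: one takes $n=4k-1$ and an odd prime $p$ with $(bP_{m+1})_{(p)}=0$ for all $m<n$ but $(bP_{n+1})_{(p)}\ne0$, which is possible because the set of primes dividing some $|bP_{4k}|$ is infinite. With that choice $[M,B]\cong H^n(M;\Z/p^a\Z)$ on the nose, and $\pi^*$ is literally multiplication by $\deg\pi=|G|$, so no version of Proposition~\ref{prop:compatible-splitting} is needed in the hyperbolic case. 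In short, once you reverse the coprimality condition (and add the requirement $\Isom(M)=\Isom^+(M)$), your outline becomes the paper's proof.
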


The manifolds in Theorem \ref{thm:asymmetric} have many topological symmetries (because $\Isom(M)$ is large and $N$ and $M$ are homeomorphic) but no smooth symmetries. This is a new phenomenon in the study of asymmetric manifolds; previous results prove asymmetry by controlling $\Out\big(\pi_1(M)\big)$. The first examples of asymmetric aspherical manifolds were constructed by Conner--Raymond--Weinberger \cite{CRW}. These examples (some of which are solvmanifolds) are \emph{topologically asymmetric} (i.e.\ $\Homeo(M)$ does not contain any nontrivial finite subgroup), and  they are shown to be asymmetric by arranging that $\Out(\pi_1(M))$ is torsionfree. In the Riemannian category, Long--Reid \cite{LR-asymmetric} gave examples of hyperbolic $n$-manifolds ($n\ge2$) that are \emph{isometrically asymmetric} (i.e.\ $\Isom(M)\cong\Out(\pi_1(M))=1$). See also \cite{belolipetsky-lubotzky}. 

The simplest nontrivial instance of Theorem \ref{thm:asymmetric} is as follows. Let $M$ be a hyperbolic 7-manifold such that $\Isom(M)=\Isom^+(M)=\Z/7\Z$ acts freely on $M$ (such examples exist by \cite{belolipetsky-lubotzky}), 
and let $\Sigma$ be a generator of $\Theta_7\cong\Z/28\Z$. Then $M\#\Sigma$ is asymmetric. We could also deduce that this example is asymmetric using Theorem \ref{thm:dim7}.

\subsection{About the proofs} For the following discussion, recall from smoothing theory that concordance classes of smooth structures on a smooth manifold $M$ are in bijection with homotopy classes of maps $M\to\TopO$. This is discussed more in \S\ref{sec:smoothing}.

{\it About Theorem \ref{thm:dim7}.} To show the action $G\curvearrowright M\#\Sigma$ is standard, it suffices to show that the quotient $(M\#\Sigma)/G$ is diffeomorphic to a manifold of the form $(M/G)\#\what\Sigma$ for some smooth action $G\curvearrowright M$ that is topologically conjugate to the action $G\curvearrowright M\#\Sigma$ (see Lemma \ref{lem:standard-actions}). Using rigidity results for hyperbolic and flat manifolds (Borel conjecture, Mostow and Bieberbach rigidity) we reduce the problem to computing a homomorphism 
\[\pi^*:[M/G,\TopO]\to[M,\TopO]\]
induced by a covering map $M\to M/G$. To compute $\pi^*$, we use results about the topology of $\TopO$, including the fact that it is an infinite loop space, which enables the use of the Atiyah--Hirzebruch spectral sequence.

{\it About Theorem \ref{thm:tori}.} To show the action $G\curvearrowright T^n\#\Sigma$ has the divisibility property, we use a similar strategy to Theorem \ref{thm:dim7}. Again, we want to compute a homomorphism 
\[\pi^*:[T^n/G,\TopO]\to[T^n,\TopO].\]
Specifically, we want to know the pre-image of $T^n\#\Sigma$ under $\pi^*$. From the point-of-view of the Atiyah--Hirzebruch spectral sequence, the divisibility property is related to the filtration 
\[F_0\subset F_1\subset\cdots\subset F_r=[M,\TopO]\]
whose associated graded appears on the $E_\infty$ page, and whether or not any of the extensions 
\[0\rightarrow F_0\rightarrow F_k\rightarrow F_k/F_0\rightarrow 0\]
are split. Here $F_0\cong\Theta_n/I(M)$, where $I(M)=\{\Sigma\in\Theta_n: M\#\Sigma\cong M\}$ is the inertia group of $M$. We are unable to resolve the necessary extension problems in general. Instead, we use some structural results about flat manifolds (Propositions \ref{prop:mapping-torus} and \ref{prop:parallelizable}) and a homotopical property of the (suspension of the) quotient map $T^n\to T^n/G$ that we call ``compatible splitting of the top cell"; see Proposition \ref{prop:compatible-splitting}. This is proved by carefully constructing an equivariant Whitney embedding of $T^n$ with its $G$ action.

{\it About Theorem \ref{thm:asymmetric}.} 
Recall that Farrell--Jones prove Theorem \ref{thm:FJ} above by finding $M,\Sigma$ with the property that the natural homomorphism 
\[\Diff(M\#\Sigma)\rightarrow\Out(\pi_1(M\#\Sigma))\] is not surjective because its image is contained in $\Out^+\big(\pi_1(M\#\Sigma)\big)$. It is impossible to improve their result with this approach because the image of this homomorphism always contains $\Out^+\big(\pi_1(M\#\Sigma)\big)$; c.f.\ \cite[Thm.\ 1]{BT-hyperbolic}. 

For Theorem \ref{thm:asymmetric} we use a result of Belolipetsky--Lubotzky \cite{belolipetsky-lubotzky} that every finite group is the orientation-preserving isometry group of some hyperbolic $n$-manifold. With this fact, it would be easy to prove the theorem if we knew that every cyclic action $G\curvearrowright M\#\Sigma$ satisfies the divisibility property, for then we could choose $M,\Sigma$ with the property that $\Sigma$ is not divisible by the order of any nontrivial element of $\Isom^+(M)$. Unfortunately, we don't know an analogue of Theorem \ref{thm:tori} for high-dimensional hyperbolic manifolds. Instead we use again the Atiyah--Hirzebruch spectral sequence, but now for $[M,\TopO_{(p)}]$ where $\TopO_{(p)}$ is the localization at a prime $p$. 

\subsection{Further direction} \

One of the motivations that led to this work is the following question related to the Zimmer program (see \cite{fisher1, fisher2}). 

\begin{qu}\label{q:zimmer}
Let $T'$ be an exotic $n$-torus. Does there exist a smooth, faithful action of $\SL_n(\Z)$ on $T'$? 
\end{qu}

There seems to be no obvious action of $\SL_n(\Z)$ on any exotic torus $T'$, so one might suspect that the answer to Question \ref{q:zimmer} is ``no". As a step toward proving this, we suggest the following problem, which is an example of a Nielsen realization problem. 

\begin{prob}\label{prob:torus-realization}
For an exotic $n$-torus $T'$, show that the natural homomorphism 
\[\Diff^+(T')\rightarrow\SL_n(\Z)\] is not a split surjection. 
\end{prob}

The only interesting case of Problem \ref{prob:torus-realization} is $T'=T^n\#\Sigma$ when $\Sigma\neq S^n\in\Theta_n$. This is because, for any other smooth exotic structure $T'$, the homomorphism $\Diff^+(T')\to\SL_n(\Z)$ is not surjective. This can be proved similar to \cite[Thm.\ 1]{BT-hyperbolic}. 

Theorem \ref{thm:tori} provides some partial progress toward Problem \ref{prob:torus-realization}, as we now explain. For $G<\SL_n(\Z)$, we say that $G$ is \emph{realized by diffeomorphisms} of $T^n\#\Sigma$ if there exists a solution to the following lifting problem
\begin{equation}\label{eqn:nielsen-tori}\begin{xy}
(0,0)*+{\Diff^+(T^n\#\Sigma)}="A";
(-20,-15)*+{G}="B";
(0,-15)*+{\SL_n(\Z)}="C";
(13,-7.5)*+{\text{action on $\pi_1$}}="D";
{\ar@{-->}"B";"A"}?*!/_3mm/{};
{\ar@{^{(}->} "B";"C"}?*!/_3mm/{};
{\ar "A";"C"}?*!/_3mm/{};
\end{xy}\end{equation}
 
If there exists a lift where $G$ acts freely on $T^n\#\Sigma$, then we say $G$ is realized \emph{freely}. Theorem \ref{thm:tori} implies the following corollary. 

\begin{cor}[Nielsen realization for $T^n\#\Sigma$]\label{cor:nielsen-tori} Fix a prime $p$ and a subgroup $G\cong\Z/p\Z$ of $\SL_n(\Z)$. If $G$ is realized freely on $T^n\#\Sigma$, then $\Sigma$ is divisible by $p$ in $\Theta_n$. 
\end{cor}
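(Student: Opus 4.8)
The plan is to obtain the corollary as an immediate consequence of Theorem~\ref{thm:tori}; the interesting range is $n\ge5$, matching the hypothesis there. Suppose $G\cong\Z/p\Z$, viewed as a subgroup of $\SL_n(\Z)$, is realized freely on $T^n\#\Sigma$ — that is, there is a homomorphism $\rho\colon G\to\Diff^+(T^n\#\Sigma)$ fitting into the lifting diagram \eqref{eqn:nielsen-tori} and inducing a free $G$-action on $T^n\#\Sigma$. Here one uses that $\pi_1(T^n\#\Sigma)\cong\Z^n$ for $n\ge3$, so that the natural map $\Diff^+(T^n\#\Sigma)\to\Out(\pi_1)=\GL_n(\Z)$ has image in $\SL_n(\Z)$, and that the bottom edge of \eqref{eqn:nielsen-tori} is the given inclusion $G\hookrightarrow\SL_n(\Z)$.

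The only thing to observe is that $\rho$ is precisely the type of action to which Theorem~\ref{thm:tori} applies: $G$ is cyclic of prime order; the action is free by hypothesis; and it is orientation-preserving because $\rho$ takes values in $\Diff^+(T^n\#\Sigma)$ (equivalently, since $G$ acts on $H_1=\Z^n$ with determinant $1$ it acts trivially on $H_n(T^n\#\Sigma)\cong\Lambda^n H_1$). Theorem~\ref{thm:tori} then gives that $\rho$ has the divisibility property, i.e.\ there exists $\what\Sigma\in\Theta_n$ with $\Sigma=\what\Sigma^{\#p}$; equivalently, $\Sigma$ is divisible by $p=|G|$ in $\Theta_n$, which is the assertion of the corollary.

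There is no real obstacle here: the corollary is a translation of Theorem~\ref{thm:tori} into the language of the Nielsen realization problem, and the only verifications needed are the routine ones indicated above — that a lift in \eqref{eqn:nielsen-tori} yields an orientation-preserving, free action of $\Z/p\Z$ on the closed smooth manifold $T^n\#\Sigma$, and that the identification $\pi_1(T^n\#\Sigma)\cong\Z^n$ matches the realization problem with the setup of the theorem. For context (though not needed for the proof), the corollary has nonvacuous content: for example $\SL_7(\Z)$ contains a subgroup $\cong\Z/7\Z$ — acting on $\Z^7\cong\Z\oplus\Z[\zeta_7]$ via the trivial representation plus multiplication by $\zeta_7$ — and since a generator of $\Theta_7\cong\Z/28\Z$ is not divisible by $7$, no such subgroup can be realized freely on $T^7\#\Sigma$ for that choice of $\Sigma$.
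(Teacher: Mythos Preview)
Your proposal is correct and matches the paper's approach exactly: the paper states the corollary immediately after the sentence ``Theorem~\ref{thm:tori} implies the following corollary'' and gives no further argument, so the intended proof is precisely the direct application of Theorem~\ref{thm:tori} that you spell out. The only additional remark is that the lift in diagram~\eqref{eqn:nielsen-tori} is automatically injective (since its composition with the map to $\SL_n(\Z)$ is the given inclusion), so the action is faithful as required by the paper's conventions; this is implicit in your argument.
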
 

\begin{example}\label{ex:torus}
Let $G\cong\Z/7\Z<\SL_7(\Z)$ be a subgroup that is induced by an affine, free action of $\Z/7\Z$ on $T^7$ (it is easy to construct examples like this). 

By Corollary \ref{cor:nielsen-tori} and the standard action construction, $G$ is realized freely on $T^7\#\Sigma$ if and only if $\Sigma$ belongs to the subgroup of $\Theta_7\cong\Z/28\Z$ that's isomorphic to $\Z/4\Z$. 
\end{example}

A possible approach to Problem \ref{prob:torus-realization} would be to prove an analogue of Theorem \ref{thm:tori} for non-free actions. We intend to pursue this in a future work. 

\subsection*{Section Outline.} In \S\ref{sec:standard} we give a characterization of standard actions and compare this notion with the divisibility property. In \S\ref{sec:smoothing} we explain the necessary background from smoothing theory. We prove Theorem \ref{thm:dim7} in \S\ref{sec:dim7}. In \S\ref{sec:flat} we prove two results about flat manifolds, which we use to prove Theorem \ref{thm:tori} in \S\ref{sec:tori}. Theorem \ref{thm:asymmetric} is proved in \S\ref{sec:asymmetric}.

\subsection*{Conventions/Notations} In the rest of the paper, all manifolds are closed, oriented, and connected; all actions preserve the orientation; $G$ is a finite group; and $\Sigma$ is an exotic $n$-sphere. Usually we use $M$ to denote an aspherical space form, and we use $W$ to denote a more general smooth manifold. We use $\mathrm{Homeo}(W)$ and $\Diff(W)$ to denote the homeomorphism and diffeomorphism groups of $W$ with the compact-open and compact-open-$C^{\infty}$ topologies respectively, and we write $\Homeo^+(W)$ and $\Diff^+(W)$ for the orientation-preserving subgroups.

\subsection*{Acknowledgements.} The second author thanks S.\ Cappell for telling him about \cite{CLW}, which inspired aspects of this project. The authors thank A.\ Kupers for sharing his result \cite{kupers} about the $k$-invariants of $\TopO$, which relates to Addendum \ref{add:hyperbolic7}. 

\section{Standard actions}\label{sec:standard}

In this section we carefully define a standard action and we give a characterization of standard actions (Lemma \ref{lem:standard-actions}) that we will use to prove Theorem \ref{thm:dim7}. 

\subsection{$G$-spaces}
We begin with some terminology. We work in the category of smooth manifolds with a faithful, orientation-preserving $G$ action. By a \emph{smooth $G$-space} we mean a pair $(W,\rho)$ where $W$ is a smooth manifold  and $\rho:G\rightarrow\Diff^+(W)$ is an injective homomorphism. Two smooth $G$-spaces are isomorphic if there is an orientation-preserving diffeomorphism $W\rightarrow W'$ that intertwines the $G$ actions. Any smooth $G$-space has an underlying topological $G$-space where we forget the smooth structure. 

We will always suppress $\rho$ from the notation and refer to $W$ has a $G$-space; we even write $G<\Diff^+(W)$. This should not cause any confusion because each manifold we consider will only ever have one $G$ action of interest. 

\subsection{Equivariant connected sum}

Let $W$ be any smooth $G$-space, and fix an embedded $n$-disk $D\subset W$ whose translates under $G$ are disjoint. For any manifold $U$, the equivariant connected sum is obtained by forming the connected sum of $W$ with a copy of $U$ along each of the disks $\{g(D): g\in G\}$. This makes $W\#U\#\cdots\#U$ ($|G|$ copies of $U$) into a smooth $G$-space.

\subsection{Standard actions} 
Fix a smooth $n$-manifold $W$. In what follows it's helpful to think of $W$ as the ``preferred" smooth structure on the underlying topological manifold.

Let $\Sigma$ be an exotic $n$-sphere, and assume that $W\#\Sigma$ is a smooth $G$-space. We say that $W\#\Sigma$ is \emph{standard} (as a $G$-space) if there is a smooth $G$-space structure on $W$ and an exotic sphere $\what\Sigma$ so that $W\#\Sigma$ is isomorphic to the equivariant connected sum $W\#\what\Sigma\#\cdots\#\what\Sigma$. 

The following lemma gives an alternate characterization of standard actions with the additional assumption that the action is free.

\begin{lem}[Standard actions]\label{lem:standard-actions}
Let $W$ be a closed smooth $n$-manifold, and fix $\Sigma\in\Theta_n$. Assume that $W\#\Sigma$ is a smooth $G$-space with $G$ acting freely. Then $W\#\Sigma$ is standard if and only if 
there is a smooth $G$-space structure on $W$ such that (i) $W\#\Sigma$ and $W$ are isomorphic as topological $G$-spaces, and (ii) the quotient manifold $(W\#\Sigma)/G$ is diffeomorphic to $(W/G)\#\what\Sigma$ for some $\what\Sigma\in\Theta_n$. 
\end{lem}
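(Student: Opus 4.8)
The plan is to prove both implications by unwinding the definition of a standard action and using the fact that the $G$-action is free, so that $W\#\Sigma\to (W\#\Sigma)/G$ is a genuine covering space. First suppose $W\#\Sigma$ is standard, so that there is a smooth $G$-space structure on $W$ and an exotic sphere $\what\Sigma$ with $W\#\Sigma\cong W\#\what\Sigma\#\cdots\#\what\Sigma$ ($|G|$ copies), the connected sum performed equivariantly along the $G$-orbit $\{g(D):g\in G\}$ of a disk $D$ with trivial stabilizer. For (i): connected sum with a homotopy sphere does not change the underlying topological manifold (since $\what\Sigma$ is homeomorphic to $S^n$, and the connected sum can be performed topologically so as to be the identity outside a collar of the gluing sphere), and this can be done $G$-equivariantly; hence $W\#\what\Sigma\#\cdots\#\what\Sigma$ and $W$ are isomorphic as topological $G$-spaces, giving (i). For (ii): because the disks $g(D)$ lie in a single free $G$-orbit, the quotient of the equivariant connected sum is obtained from $W/G$ by a single connected sum with $\what\Sigma$ along the image disk, so $(W\#\Sigma)/G\cong (W/G)\#\what\Sigma$, giving (ii).

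Conversely, suppose we are given a smooth $G$-space structure on $W$ satisfying (i) and (ii); I want to show $W\#\Sigma$ is standard. Let $\phi:(W\#\Sigma)/G\xrightarrow{\ \cong\ }(W/G)\#\what\Sigma$ be the diffeomorphism from (ii). The right-hand side carries a preferred embedded disk $D_0$ (the complement of $\what\Sigma$), away from which it is canonically identified with $W/G$ minus a disk. Pull back the covering $W\to W/G$ along the map $(W/G)\#\what\Sigma\to W/G$ that collapses $\what\Sigma$ (this collapse map is a homotopy equivalence, hence the pulled-back cover is classified by the same subgroup of $\pi_1$, so it is again a $|G|$-fold cover with the $G$-action on $W$); unwinding, $((W/G)\#\what\Sigma$ with this cover$)$ is diffeomorphic to $W$ with $|G|$ disjoint lifted disks of $D_0$ removed, then capped with $\what\Sigma$'s, i.e.\ to the equivariant connected sum $W\#\what\Sigma\#\cdots\#\what\Sigma$. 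Via $\phi$ and the given covering $W\#\Sigma\to(W\#\Sigma)/G$, this exhibits a $G$-equivariant diffeomorphism $W\#\Sigma\cong W\#\what\Sigma\#\cdots\#\what\Sigma$. Finally, condition (i) ensures the $G$-action on $W$ we started from is topologically conjugate to the one on $W\#\Sigma$, so this really is the standard-action construction applied to a smooth $G$-space structure on $W$, and $W\#\Sigma$ is standard.

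The main obstacle is bookkeeping in the ``$\Leftarrow$'' direction: one must check that the covering of $(W/G)\#\what\Sigma$ induced from the free $G$-cover of $W/G$ via the collapse map is $G$-equivariantly diffeomorphic to the equivariant connected sum $W\#\what\Sigma^{\#|G|}$, rather than to some twisted version where the $|G|$ copies of $\what\Sigma$ are attached with inconsistent orientations or along an orbit that is not a single free orbit. This requires care that the connected sum disk $D_0\subset(W/G)\#\what\Sigma$ is contained in an open set over which the cover is trivial (which holds since a disk is simply connected), so its preimage is $|G|$ disjoint disks permuted freely and transitively by $G$; and that the gluing of $\what\Sigma$ can be taken $G$-equivariantly with consistent orientations, which is where the orientation-preserving hypotheses on the action are used. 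Everything else (invariance of the topological type under connected sum with a homotopy sphere, compatibility of quotients with equivariant connected sum) is formal once the free-action hypothesis is in place.
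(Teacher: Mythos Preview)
Your forward direction matches the paper's proof: equivariant coning gives the topological $G$-isomorphism in (i), and the free quotient of the equivariant connected sum is $(W/G)\#\what\Sigma$, giving (ii).

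In the reverse direction your overall architecture is also the paper's: both arguments produce two smooth $G$-covers of $(W/G)\#\what\Sigma$, namely the equivariant connected sum $W\#\what\Sigma^{\#|G|}\to(W/G)\#\what\Sigma$ and the transport of $W\#\Sigma\to(W\#\Sigma)/G$ along the diffeomorphism $\phi$ from (ii), and then argue that these covers are isomorphic. But there is a genuine gap at the sentence ``Via $\phi$ and the given covering $W\#\Sigma\to(W\#\Sigma)/G$, this exhibits a $G$-equivariant diffeomorphism $W\#\Sigma\cong W\#\what\Sigma\#\cdots\#\what\Sigma$.'' Having the same smooth base is not enough: two regular $G$-covers of $(W/G)\#\what\Sigma$ are isomorphic only if they correspond to the same normal subgroup of $\pi_1\big((W/G)\#\what\Sigma\big)\cong\pi_1(W/G)$, and there is no reason the \emph{arbitrary} diffeomorphism $\phi$ from (ii) should carry $\pi_1(W\#\Sigma)\subset\pi_1\big((W\#\Sigma)/G\big)$ to $\pi_1(W)\subset\pi_1(W/G)$. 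This is exactly the step where condition (i) is needed, and it is how the paper uses it: (i) supplies an equivariant homeomorphism $W\#\Sigma\to W$, hence a homeomorphism $(W\#\Sigma)/G\to W/G$ that \emph{does} match the two subgroups, so that both $p$ and $p'$ are topologically the cover $W\to W/G$; then the uniqueness of the smooth structure on a cover of a smooth base upgrades this to a smooth isomorphism of covers.

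Your final sentence invokes (i) only to say that ``the $G$-action on $W$ we started from is topologically conjugate to the one on $W\#\Sigma$,'' but that is literally the hypothesis you already assumed; it does no work there. Move the use of (i) earlier, to identify the classifying subgroups of the two covers, and the argument goes through exactly as in the paper. The ``main obstacle'' you flag in your last paragraph (orientations, free orbit of disks) is real but comparatively minor; the covering-space identification is the essential point.
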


\begin{proof}[Proof of Lemma \ref{lem:standard-actions}]
For the forward direction: if $W\#\Sigma$ is standard, then by definition there is a smooth $G$-space structure on $W$, an exotic sphere $\what\Sigma$, and an equivariant diffeomorphism $F:W\#\Sigma\rightarrow W\#\what\Sigma\#\cdots\#\what\Sigma$ to the equivariant connected sum. This implies (i) because the equivariant connected sum is isomorphic to $W$ as a topological $G$-space (by equivariant coning/Alexander trick). Furthermore, since $F$ is an equivariant diffeomorphism and the $G$-actions are free, it descends to a diffeomorphism 
\[(W\#\Sigma)/G \rightarrow(W\#\what\Sigma\#\cdots\#\what\Sigma)/G\cong (W/G)\#\what\Sigma.\]

For the reverse direction, suppose given a $G$-space structure on $W$ and an exotic sphere $\what\Sigma$ so that properties (i) and (ii) hold, and write 
\[f:(W/G)\#\what\Sigma\rightarrow(W\#\Sigma)/G\]
for the diffeomorphism given by (ii).  We want to show there is an equivariant diffeomorphism between $W\#\Sigma$ and the equivariant connect sum $W\#\what\Sigma\#\cdots\#\what\Sigma$. 

On the one hand, the covering map $W\#\Sigma\rightarrow(W\#\Sigma)/G$ pulls back along $f$ to a covering 
\[p:X\rightarrow(W/G)\#\what\Sigma.\]  

On the other hand, the given action of $G$ on $W$ induces a standard action $G\curvearrowright W\#(\what\Sigma^{\#|G|})$ which defines a smooth covering space 
\[p':W\#(\what\Sigma^{\#|G|})\rightarrow (W/G)\#\what\Sigma.\]

The lemma follows by showing that $X$ and $W\#(\what\Sigma^{\#|G|})$ are (smoothly) isomorphic covering spaces of $(W/G)\#\what\Sigma$. Since the smooth structure on a smooth cover of a smooth manifold is uniquely determined, it suffices to observe that $p$ and $p'$ are isomorphic in the topological category. This holds because both are topologically equivalent to the covering $W\rightarrow W/G$. This holds for $p'$ by construction, and holds for $p$ by our assumption that $G\curvearrowright W\#\Sigma$ and $G\curvearrowright W$ are topologically conjugate. 

Put another way, we have the following diagram. 
\[\begin{xy}
(-70,0)*+{X}="A";
(-40,0)*+{W\#\Sigma}="B";
(-10,0)*+{W}="C";
(20,0)*+{W\#(\what\Sigma)^{\#|G|}}="D";
(-70,-15)*+{(W/G)\#\what\Sigma}="F";
(-40,-15)*+{(W\#\Sigma)/G}="G";
(-10,-15)*+{W/G}="H";
(20,-15)*+{(W/G)\#\what\Sigma}="I";
{\ar"A";"B"}?*!/_3mm/{\cong_{\Diff}};
{\ar"A";"F"}?*!/^3mm/{p};
{\ar "B";"C"}?*!/_3mm/{\cong_{\Top}};
{\ar "D";"C"}?*!/^3mm/{};
{\ar"F";"G"}?*!/_3mm/{f};
{\ar "G";"H"}?*!/_3mm/{};
{\ar "I";"H"}?*!/^3mm/{};
{\ar "B";"G"}?*!/_3mm/{};
{\ar "C";"H"}?*!/^3mm/{};
{\ar "D";"I"}?*!/_3mm/{p'};
\end{xy}\]
In order for the covering spaces $p,p'$ to be equivalent, we want each square in this diagram to commute. The left-most square commutes by construction. The middle square commutes by the assumption (i) that $W\#\Sigma$ and $W$ are isomorphic as topological $G$-spaces. In the right-most square the horizontal maps are given by coning on the bottom and equivariant coning on the top, so this square also commutes. 
\end{proof}

\subsection{Standard actions vs.\ divisibility property}

If $G\curvearrowright W\#\Sigma$ is standard, then there exists $\what \Sigma$ and an orientation-preserving diffeomorphism 
\[W\#\Sigma\cong W\#(\what\Sigma^{\#|G|}).\]
When $W$ is a stably parallelizable aspherical space form, this implies that $[\Sigma]=[\what\Sigma^{\#|G|}]$ in $\Theta_n$ by Proposition \ref{prop:inertia}. Hence a standard action on such manifolds has the divisibility property. The following Example \ref{ex:divisibility} demonstrates that having the divisibility property is weaker than being standard in general. 

\begin{example}\label{ex:divisibility}
Let $\Sigma\in\Theta_7$ be a nontrivial element of order 7. Let $G=\Z/7\Z$ act on $T^7$ by rotations and form an equivariant connected sum to get an action of $G$ on $T^7\#\Sigma^{\#7}$. Here $T^7\#\Sigma^{\#7}$ is diffeomorphic to $T^7$, but they are not isomorphic as $G$-spaces because the quotients $T^7/G\cong{T^7}$ and $(T^7\#\Sigma^{\#7})/G\cong T^7\#\Sigma$ are not diffeomorphic (see Proposition \ref{prop:inertia} below). Let now $\Omega\in\Theta_8\cong\Z/2\Z$ be the nontrivial element, and define a $G$-space 
\[N=\big[(T^7\#\Sigma^{\#7})\times S^1\big]\#\,\Omega^{\#7},\]
where $G$ acts trivially on the second factor of $(T^7\#\Sigma^{\#7})\times S^1$, and the connect sum with $\Omega^{\#7}$ is performed equivariantly. By construction $N$ is diffeomorphic to $T^8\#\Omega$, but the action of $G$ on $N$ is not standard by Lemma \ref{lem:standard-actions} because the quotient 
\[N/G\cong\big[(T^7\#\Sigma)\times S^1\big]\#\Omega\]
is not diffeomorphic to $T^8\#\,\Omega$ (if they are, then take connected sum with $\Omega$ to deduce that $T^8$ and $(T^7\#\Sigma)\times S^1$ are diffeomorphic, a contradiction). Note however that the divisibility property is satisfied since $\Omega=\Omega^{\#7}$. 
\end{example}

\begin{rmk}
It would be interesting to expand the notion of a standard action to include the action in Example \ref{ex:divisibility} and to prove a more general classification theorem.  
\end{rmk}

\begin{qu}
Does there exist a non-positively curved manifold $W$, an exotic sphere $\Sigma$, and a free, orientation-preserving action of a finite group $G$ on $W\#\Sigma$ that does not satisfy the divisibility property? 
\end{qu}

\section{Background from smoothing theory} \label{sec:smoothing}

In this section we collect various results from smoothing theory. For an informal treatment of the foundational results, we recommend the notes of Davis \cite{davis}; a more formal treatment is given in Siebenmann's \cite[Essay IV]{kirby-siebenmann}. In addition to the foundational results, we include a basic result about the concordance classes of smooth structures $W\#\Sigma$ (Lemma \ref{lem:concordance}), and a computation of the inertia group for closed aspherical space forms (Proposition \ref{prop:inertia}). 

\subsection{Smooth structures}

A \emph{smooth structure} or \emph{smoothing} of a topological manifold $W$ is a choice of maximal smooth atlas. A \emph{smooth marking} on $W$ is a pair $(U,h)$ where $U$ is a smooth manifold and $h:U\rightarrow W$ is a homeomorphism; this in particular determines a smoothing by pushing forward a smooth atlas. Two marking $(U,h)$ and $(U',h')$ determine the same smoothing if and only if there is a diffeomorphism $\phi:U'\rightarrow U$ so that $h'=h\circ\phi$. 

Two smoothings of $W$ are \emph{concordant} if they can be realized as the boundary of a smoothing of $W\times[0,1]$. A concordance can be represented as a marking $V\to W\times[0,1]$.   

We denote the set of concordance classes of smooth structures on $W$ by $S(W)$.

In the following classification theorem \cite[Essay IV, Thm.\ 10.1]{kirby-siebenmann}, $\TopO$ denotes the homotopy fiber of the natural map $B\OO\rightarrow B\Top$, where $\OO=\colim \OO(n)$ is the infinite orthogonal group, and $\Top=\colim \Homeo(\R^n,0)$, with $\Homeo(\R^n,0)$ the topological group of homeomorphisms of $\R^n$ that fix the origin. We denote by $[W,\TopO]$ the set of homotopy classes of based maps; this is equivalent to the set of homotopy classes of unbased maps since $\TopO$ is an $H$-space under Whitney sum (and because we assume $W$ is connected).
\begin{thm}Let $W$ be a closed topological manifold of
dimension $n\ge5$. Then a choice of a smooth structure $U\xrightarrow{h}W$ on $W$ determines a bijection $S(W)\xrightarrow{\cong} [W,\TopO]$ under which $(U,h)$ is sent to the homotopy class of the constant map.
\end{thm}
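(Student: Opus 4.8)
The plan is to reduce the statement to a purely homotopy-theoretic classification of reductions of the structure ``group'' of the tangent microbundle, using the Kirby--Siebenmann Product Structure Theorem, and then to read off the answer from the fibration sequence defining $\TopO$. First I would invoke the Product Structure Theorem: for $W$ closed of dimension $n\ge5$, restriction induces a bijection between concordance classes of smoothings of $W\times\R^k$ and those of $W$, for every $k\ge0$. Taking $k$ large (the stable range), a smoothing of the open manifold $W\times\R^k$ is equivalent, by immersion-theoretic/handle-straightening arguments, to a vector-bundle reduction of the stable topological tangent microbundle of $W$; equivalently, to a lift of its classifying map $\tau_W\colon W\to B\Top$ along the forgetful map $B\OO\to B\Top$. (The point is that $W$ is a manifold, so its tangent microbundle records all the obstruction-theoretic data, and that the smoothing problem stabilizes.) Hence $S(W)$ is in bijection with the set of vertical homotopy classes of lifts of $\tau_W$ through $B\OO\to B\Top$.

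Next, since $\TopO$ is by definition the homotopy fiber of $B\OO\to B\Top$, there is a fibration sequence $\TopO\to B\OO\to B\Top$. The chosen smoothing $U\xrightarrow{h}W$ provides one lift $\ell_0\colon W\to B\OO$ of $\tau_W$, namely the one classifying the honest tangent vector bundle of $U$ transported along $h$. Relative to the basepoint $\ell_0$, obstruction theory along this fibration identifies the set of vertical homotopy classes of lifts with $[W,\TopO]$: a lift is determined up to vertical homotopy by its ``difference'' with $\ell_0$, which is a section of the $\TopO$-bundle pulled back along $\ell_0$, and this bundle is trivial because $B\OO\to B\Top$ is a map of $H$-spaces. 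Under this identification $\ell_0$ itself---that is, the smoothing $(U,h)$---goes to the constant section, i.e.\ the constant map, which is the asserted normalization. Because $\TopO$ is an $H$-space under Whitney sum and $W$ is connected, based and free homotopy classes coincide, so $[W,\TopO]$ is unambiguous and the bijection is independent of auxiliary basepoint choices; one also checks routinely that it is independent of $k$ and of the chosen marking within its concordance class.

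The genuinely hard input is the Kirby--Siebenmann Product Structure Theorem, in both its absolute and relative forms, which is what forces the hypothesis $n\ge5$; granting this, the remaining work is bookkeeping. The two points requiring care are: verifying that $\ell_0$ is the correct basepoint and that the torsor structure on the set of lifts is the one induced by the Whitney-sum $H$-space structure on $\TopO$; and the injectivity statement---that vertically homotopic classifying lifts come from concordant smoothings---which uses the relative Product Structure Theorem applied to $W\times[0,1]$ and is precisely where the concordance equivalence relation, rather than diffeomorphism, is essential.
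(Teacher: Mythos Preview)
The paper does not give its own proof of this statement; it is quoted as background from Kirby--Siebenmann \cite[Essay IV, Thm.\ 10.1]{kirby-siebenmann} and used without argument. Your sketch is essentially the standard Kirby--Siebenmann route: stabilize via the Product Structure Theorem, identify stable smoothings with vector-bundle reductions of the topological tangent microbundle (i.e.\ lifts of $\tau_W\colon W\to B\Top$ through $B\OO$), and then use the $H$-space/infinite loop structure on the fibration $\TopO\to B\OO\to B\Top$ to identify the set of lifts, based at the lift $\ell_0$ determined by $(U,h)$, with $[W,\TopO]$. So there is nothing in the paper to compare against; your outline matches the cited source, and the points you flag as delicate (the torsor structure on lifts, and the relative Product Structure Theorem for injectivity) are exactly the places where the work lies.
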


To compute $[W,\TopO]$, we use that $\TopO$ is in fact an infinite loop space \cite[p.216]{Boardman-Vogt}. This affords two different approaches: 
\begin{enumerate}
\item For any $k\ge1$, if we write $\TopO=\Omega^kY$, then \[[W,\TopO]\cong[W,\Omega^kY]\cong[\Sigma^kW,Y].\]
Thus information about the homotopy type of $\Sigma^kW$ (which can be simpler than that of $W$) can allow us to make conclusions about $[W,\TopO]$. 
\item We can view $[W,\TopO]$ as the $0$-th group of a cohomology theory. Then the Atiyah--Hirzebruch spectral sequence can be used to gain information about $[W,\TopO]$. We elaborate on this more below. 
\end{enumerate} 

\subsection{Smooth structures of the form $W\#\Sigma$.} 
View $W\#\Sigma$ as $(W\setminus \Int D^n)\cup_\phi D^n$, where $D^n$ is glued to $W\setminus\Int D^n$ along the common boundary $\partial D^n= S^{n-1}$ by $\phi\in\Diff^+(S^{n-1})$ whose isotopy class $[\phi]\in\pi_0(\Diff^+(S^{n-1}))\cong\Theta_n$ corresponds to $[\Sigma]$. From this point-of-view, there is a ``standard'' homeomorphism $\iota:W\#\Sigma\rightarrow W$, which is the identity on $W\setminus\Int D^n$ and $\rest{\iota}{D^n}$ is the cone of $\phi$ (i.e.\ choose an isotopy from $\phi$ to the identity in the connected group $\Homeo^+(S^{n-1})$). 

The map $W\rightarrow\TopO$ that classifies $(W\#\Sigma,\iota)$ can be obtained by a composition 
\[W\xrightarrow{c} S^n\xrightarrow{f}\TopO,\] where $c$ collapses the complement of $D^n\subset W$ to a point, and $f$ classifies $\Sigma\in S(S^n)\cong\pi_n(\TopO)$. 

In the following lemma, we examine the action of $\Homeo(W)$ on $S(W)$ given by post-composition. 

\begin{lem}\label{lem:concordance}
Fix $n\ge5$. Let $W$ be a smooth $n$-manifold. Fix $\Sigma\in\Theta_n$ and let $\iota:W\#\Sigma\rightarrow W$ be a coning homeomorphism. If every homeomorphism of $W$ is pseudo-isotopic to a diffeomorphism, then the markings $(W\#\Sigma,\iota)$ and $(W\#\Sigma,h\circ\iota)$ determine concordant smoothings for each $h\in\Homeo^+(W)$. 
\end{lem}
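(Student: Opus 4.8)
The plan is to reduce the statement to the fact that a pseudo-isotopy between two homeomorphisms yields a concordance between the corresponding smoothings. First I would recall what a smoothing determined by a marking is: the marking $(W\#\Sigma, \iota)$ pushes the smooth atlas of $W\#\Sigma$ forward along $\iota$ to a smooth atlas on the topological manifold underlying $W$, and likewise $(W\#\Sigma, h\circ\iota)$ pushes it forward along $h\circ\iota$. Two such markings $(U,h_0)$ and $(U,h_1)$ with the same source $U$ determine concordant smoothings of $W$ precisely when there is a homeomorphism $H: U\times[0,1]\to W\times[0,1]$, smooth in a suitable sense on the interior, restricting to $h_0$ and $h_1$ on the two ends — i.e.\ when $h_0$ and $h_1$ are \emph{pseudo-isotopic} as homeomorphisms $U\to W$. (Here the relevant notion is pseudo-isotopy of homeomorphisms; it is exactly engineered so that a pseudo-isotopy traces out a smooth structure on the product.)

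Next I would observe that since $h_1 = h\circ\iota$ and $h_0 = \iota$, we have $h_1 = (h\circ\iota)$ while $h_0 = (\id_W)\circ\iota$, so $h_1\circ h_0^{-1} = h$ as homeomorphisms of $W$. By hypothesis, $h$ is pseudo-isotopic to a diffeomorphism $g\in\Diff^+(W)$; write $\Phi: W\times[0,1]\to W\times[0,1]$ for a homeomorphism that is level-preserving near the ends, restricting to $h$ at $0$ and to $g$ at $1$. Composing $\Phi$ with $\iota\times\id$ gives a homeomorphism $W\#\Sigma\times[0,1]\to W\times[0,1]$ whose restriction to the $0$-end is $h\circ\iota$ and whose restriction to the $1$-end is $g\circ\iota$. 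Pushing the product smooth structure of $(W\#\Sigma)\times[0,1]$ forward along this map exhibits a concordance between the smoothing of $W$ determined by $(W\#\Sigma, h\circ\iota)$ and the one determined by $(W\#\Sigma, g\circ\iota)$.

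It therefore suffices to show that $(W\#\Sigma, g\circ\iota)$ and $(W\#\Sigma, \iota)$ determine the same (hence concordant) smoothing, where $g$ is a diffeomorphism of $W$. But by the criterion for two markings with the same source to determine the same smoothing — there being a diffeomorphism $\phi$ of $W\#\Sigma$ with $g\circ\iota = \iota\circ\phi$ — we may take $\phi = \iota^{-1}\circ g\circ\iota$; this is the conjugate of a diffeomorphism of $W$ by the homeomorphism $\iota$, so it need not itself be a diffeomorphism of $W\#\Sigma$. To fix this, I would instead argue directly: the smoothing on $W$ determined by $(W\#\Sigma, g\circ\iota)$ is the pushforward along $g$ of the smoothing determined by $(W\#\Sigma,\iota)$, and since $g$ is a diffeomorphism of $W$ (for its given smooth structure), pushing a smooth atlas forward along $g$ returns an atlas concordant to the one we started with — indeed isotopic to it via the isotopy from $g$ to $\id$ in $\Diff^+(W)$ if $g$ is isotopic to the identity, and in general concordant because any diffeomorphism of a smooth manifold is smoothly pseudo-isotopic to the identity through a product structure it itself generates.

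The main obstacle I expect is keeping the categories straight: the subtlety is that "$h$ pseudo-isotopic to a diffeomorphism" is a statement about homeomorphisms of $W$ with its \emph{fixed} smooth structure, while the conclusion concerns smoothings pulled back along $\iota$, so the diffeomorphism $g$ and the homeomorphism $\iota$ live in different categories and cannot simply be composed into a diffeomorphism of $W\#\Sigma$. The clean way to handle this is to phrase everything in terms of markings and to use that (a) pseudo-isotopic markings give concordant smoothings and (b) a diffeomorphism of a smooth manifold does not change the concordance class of its smooth structure under pushforward — both of which are standard, e.g.\ from \cite[Essay IV]{kirby-siebenmann}.
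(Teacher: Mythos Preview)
Your reduction to the case where $h$ is replaced by a diffeomorphism $g$ is correct and is exactly what the paper does: pseudo-isotopic homeomorphisms act identically on $S(W)$, so $(W\#\Sigma,h\circ\iota)$ and $(W\#\Sigma,g\circ\iota)$ are concordant.

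The gap is in your final step. You assert that pushing the $W\#\Sigma$-atlas forward along a diffeomorphism $g$ of the \emph{standard} $W$ yields a concordant structure, ``because any diffeomorphism of a smooth manifold is smoothly pseudo-isotopic to the identity.'' This is false: diffeomorphisms are not in general pseudo-isotopic to the identity (think of any manifold with nontrivial $\pi_0\Diff$), and more to the point, the $\Homeo^+(W)$-action on $S(W)$ factors through $[W,\TopO]$ by precomposition, so a diffeomorphism $g$ that is not homotopic to the identity can permute concordance classes nontrivially. Being a diffeomorphism of the \emph{base} smooth structure says nothing about how $g$ moves \emph{other} smooth structures. You correctly flagged the danger ($\iota^{-1}g\iota$ need not be smooth on $W\#\Sigma$), but the fix you propose does not work.

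What rescues the argument is the specific form of $\iota$: it is the identity outside the disk $D\subset W$ where the connected sum is performed. Since $g$ is orientation-preserving, you can smoothly isotope $g$ to a diffeomorphism $g_1$ that is the identity on a neighbourhood of $D$. The isotopy $g_t$ gives a concordance $(x,t)\mapsto(g_t\circ\iota(x),t)$ between $(W\#\Sigma,g\circ\iota)$ and $(W\#\Sigma,g_1\circ\iota)$. Now, because $g_1$ is the identity near $D$ and $\iota$ is the identity outside $D$, the conjugate $g_1':=\iota^{-1}\circ g_1\circ\iota$ \emph{is} a genuine diffeomorphism of $W\#\Sigma$ (it equals $g_1$ away from $D$ and the identity near $D$), so $(W\#\Sigma,g_1\circ\iota)$ and $(W\#\Sigma,\iota)$ are literally the same smoothing. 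This is the missing idea: exploit that the exotic structure is localized in $D$ and that any orientation-preserving diffeomorphism can be isotoped off $D$.
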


\begin{proof}[Proof of Lemma \ref{lem:concordance}]
The proof is straightforward. We give the details for completeness. Let $g$ be a diffeomorphism of $W$ that's pseudo-isotopic to $h$. Then the smoothings given by the markings $(W\#\Sigma,h\circ\iota)$ and $(W\#\Sigma,g\circ\iota)$ are concordant since homeomorphisms of $W$ that are pseudo-isotopic to the identity act trivially on $S(W)$. 

Now we show directly that $(W\#\Sigma,g\circ\iota)$ is concordant to $(W\#\Sigma,\iota)$. 
Let $D\subset W$ be the $n$-disk where the connected sum $W\#\Sigma$ is performed. Since $g$ preserves orientation, there is  an isotopy of $g_t$ with $g_0=g$ and such that $g_1$ is the identity on a neighborhood of $D$. The homeomorphism $(W\#\Sigma)\times[0,1]\rightarrow W\times[0,1]$ defined by $(x,t)\mapsto (g_t\circ\iota(x),t)$ gives a concordance between $(W\#\Sigma,g\circ\iota)$ and $(W\#\Sigma,g_1\circ\iota)$, but $g_1\circ\iota=\iota\circ g_1'$ for a diffeomorphism of $W\#\Sigma$ since $g_1$ is the identity near $D$. Thus the markings $(W\#\Sigma,g_1\circ\iota)$ and $(W\#\Sigma,\iota)$ determine the same smoothing, so the smoothings $(W\#\Sigma,g\circ\iota)$ and $(W\#\Sigma,\iota)$ are concordant.
\end{proof}

\begin{rmk}
Another way to state the conclusion of Lemma \ref{lem:concordance} is that the concordance class of $(W\#\Sigma,f)$ is independent of the choice of homeomorphism $f:W\#\Sigma\to W$ (since $f=(f\circ\iota^{-1})\circ\iota$). As such, we can safely omit the marking when referring to the smoothing $W\#\Sigma$. 
\end{rmk}

\begin{rmk}[Applying Lemma \ref{lem:concordance}]\label{rmk:concordance}
If $M$ is a closed aspherical space form of dimension $\ge5$, then $M$ satisfies the assumption of Lemma \ref{lem:concordance}. By the Borel conjecture \cite{farrell-hsiang,FJ-borel-hyperbolic}, the quotient of $\Homeo(M)$ by the group of homeomorphism pseudo-isotopic to the identity is $\Out\big(\pi_1(M)\big)$; see \cite[proof of Lem.\ 7]{davis}. When $M$ is hyperbolic, this group is $\Isom(M)$ by Mostow rigidity. When $M$ is flat, every automorphism of $\pi_1(M)$ is induced by an affine diffeomorphism by Bieberbach's theorem \cite[Ch.\ I, Thm.\ 4.1]{charlap}. 
\end{rmk}

\subsection{Inertia group for aspherical space forms}

Recall that for a smooth manifold $W$, the inertia group $I(W)$ is the subgroup of $\Theta_n$ consisting of all homotopy spheres $\Sigma\in\Theta_n$ such that $W\#\Sigma$ is diffeomorphic to $W$ by an orientation-preserving diffeomorphism. In this section we show that if $W$ is a stably parallelizable aspherical space form, then $I(W)$ is trivial. In particular, this shows that there are plenty of exotic aspherical space forms to which our results apply. 

A smooth manifold is \textit{stably parallelizable} if it can be embedded with trivial normal bundle in some Euclidean space. We say that a closed aspherical manifold \textit{satisfies the Borel conjecture} if every self-homotopy equivalence is homotopic to a homemorphism.
\begin{prop}\label{prop:inertia}
Let $W$ be a closed aspherical manifold. Assume
\begin{enumerate}[(i)]
\item $W$ is stably parallelizable; 
\item $W$ satisfies the Borel conjecture; 
\item the natural map $\Diff(W)\to\Out\big(\pi_1(W)\big)$ is surjective. 
\end{enumerate} 
Then the inertia group $I(W)$ is trivial. 
\end{prop}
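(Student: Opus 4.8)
The plan is to show that if $\Sigma \in I(W)$, then $\Sigma = 0$ in $\Theta_n$, by exploiting the classification of smooth structures via $[W,\TopO]$ together with the hypotheses. First I would translate the hypothesis $\Sigma \in I(W)$ into a statement about concordance classes: if there is an orientation-preserving diffeomorphism $\psi: W\#\Sigma \to W$, then the marking $(W\#\Sigma,\psi)$ determines the \emph{same} concordance class in $S(W)$ as the standard marking $(W,\id)$, i.e.\ the class of the constant map. But by Lemma \ref{lem:concordance} (whose hypothesis is supplied by (ii) and (iii), exactly as in Remark \ref{rmk:concordance}), the concordance class of $W\#\Sigma$ is independent of the marking, so $(W\#\Sigma,\iota)$ is also concordant to the standard smoothing. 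Hence the classifying map $W \xrightarrow{c} S^n \xrightarrow{f} \TopO$ described just before Lemma \ref{lem:concordance}, where $c$ collapses the complement of a disk and $f$ classifies $\Sigma \in \pi_n(\TopO)$, is null-homotopic.

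Next I would argue that null-homotopy of $f\circ c$ forces $f$ itself to be null-homotopic, which gives $\Sigma = 0$ since $S(S^n) \cong \pi_n(\TopO) \cong \Theta_n$. This is where stable parallelizability enters: the collapse map $c: W \to S^n$ onto the top cell admits a stable section. Indeed, a choice of embedding $W \hookrightarrow \R^{n+k}$ with trivial normal bundle gives, via the Pontryagin--Thom construction, a stable map $S^{n+k} \to \Sigma^k W_+$ splitting off the top cell (this is the standard fact that a stably parallelizable closed manifold is stably reducible, i.e.\ the top cell of $W_+$ splits off stably after one suspension). Composing, $\Sigma^k c$ has a stable right inverse. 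Since $\TopO$ is an infinite loop space, $[W,\TopO] \cong [\Sigma^k W, Y]$ for $\TopO = \Omega^k Y$, and a stable splitting of the top cell of $W$ yields that $c^*: \pi_n(\TopO) = [S^n,\TopO] \to [W,\TopO]$ is (split) injective. Therefore $f\circ c \simeq *$ implies $f \simeq *$, so $[\Sigma] = 0$ and $I(W)$ is trivial.

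The main obstacle I expect is making the stable splitting argument precise and checking it is compatible with the infinite-loop-space structure: one must verify that the Pontryagin--Thom splitting $\Sigma^k c: \Sigma^k W \to \Sigma^k S^n = S^{n+k}$ really does have a section as a stable (not merely unstable) map, and that "null-homotopic after composing with a split surjection of spectra implies null-homotopic" is applied at the right place — i.e.\ one works in $[\,\cdot\,, Y]$ for the infinite delooping $Y$ of $\TopO$, where $c^*$ becomes split injective because $c$ is a split surjection on the relevant suspension spectra. Everything else — the passage from "diffeomorphic" to "same concordance class," the invocation of Lemma \ref{lem:concordance} and Remark \ref{rmk:concordance}, and the identification $S(S^n) \cong \Theta_n$ — is routine given the setup already developed in this section.
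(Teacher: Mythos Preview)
Your proposal is correct and takes essentially the same approach as the paper's proof. The only cosmetic difference is that the paper argues the first step (that $(W\#\Sigma,\iota)$ is concordant to the standard smoothing) directly from hypotheses (ii) and (iii) rather than by citing Lemma~\ref{lem:concordance}, and it packages the stable top-cell splitting as Lemma~\ref{lem:splitting}; the substance of both steps is identical to what you outline.
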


Proposition \ref{prop:inertia} can be applied to a closed aspherical space form after passing to a finite cover. Regarding condition $(i)$, every closed hyperbolic manifold has a finite sheeted cover that is stably parallelizable \cite{sullivan, okun}. The same statement holds for flat manifolds since every flat manifold is finitely covered by $T^n$ by Bieberbach's (first) theorem \cite[Ch.\ I, Thm.\ 3.1]{charlap}. Furthermore, there are lots of flat manifolds different from $T^n$ that are parallelizable; see \cite{thorpe}. Conditions $(ii)-(iii)$ hold for all aspherical space forms of dimension $\ge5$. The Borel conjecture was proved in the flat case by Farrell--Hsiang \cite{farrell-hsiang} and in the hyperbolic case by Farrell--Jones \cite{FJ-borel-hyperbolic}. Condition $(iii)$ holds for hyperbolic manifolds by Mostow rigidity and holds for flat manifolds by Bieberbach's (second) theorem \cite[Ch.\ I, Thm.\ 4.1]{charlap}. 

A proof of Proposition \ref{prop:inertia} is given in the hyperbolic case in \cite[\S2]{FJ-exotic-neg}. The general case is similar.

\begin{proof}[Proof of Proposition \ref{prop:inertia}]
Fix $\Sigma\in\Theta_n$. Suppose that there is an orientation-preserving diffeomorphism $W\cong W\#\Sigma$. We want to show $[\Sigma]=[S^n]$ in $\Theta_n$. 

First we show that if there exists a diffeomorphism $f:W\#\Sigma\to W$, then the markings $(W,\id)$ and $(W\#\Sigma,\iota)$ give concordant smooth structures. Here $\iota$ is the coning map. By assumption, there exists a diffeomorphism $g:W\to W$ so that the homeomorphism $\iota\circ f^{-1}\circ g^{-1}:W\to W$ is homotopic to the identity. Since $g\circ f$ is a diffeomorphism, the markings $(W\#\Sigma,\iota)$ and $(W,\iota\circ f^{-1}\circ g^{-1})$ determine the same smoothing. We will show that $(W,\id)$ and $(W,\iota\circ f^{-1}\circ g^{-1})$ are concordant. A homotopy from $\iota\circ f^{-1}\circ g^{-1}$ to the identity gives a map $h:W\times [0,1]\to W\times[0,1]$ that restricts to a homeomorphism on the boundary. Since we assume the Borel conjecture for $W$, the map $h$ is homotopic rel boundary to a homeomorphism $h'$. Giving the domain of $h'$ the product smooth structure, the  homeomorphism $h'$ gives a smooth marking that is a concordance between $(W,\id)$ and $(W,\iota\circ f^{-1}\circ g^{-1})$, as desired.  

It remains to show that if $W$ and $W\#\Sigma$ are concordant, then $[\Sigma]=[S^n]$ in $\Theta_n$. Let $D^n\hookrightarrow W$ be an embedded disk, and let $c:W\to S^n$ be the map that collapses the complement of $D^n$ to a point. This induces a map 
\[\Theta_n\cong[S^n,\TopO]\xrightarrow{c^*}[W,\TopO]\cong S(W)\]
It suffices to show that $c^*$ is injective. Since $\TopO$ is an infinite loop space, if we write $\TopO=\Omega^kY$, then $[-,\TopO]\cong[\Sigma^k(-),Y]$ (adjunction), so it is equivalent to show that the map 
\begin{equation}\label{eqn:delooped}[S^{n+k},Y]\to[\Sigma^kW,Y]\end{equation}
is injective for some $k$. Since $W$ is stably parallelizable, for $k\ge n$, there is a homotopy equivalence $S^{n+k}\vee Z_W\to\Sigma^kW$, where $Z_W$ is a finite CW complex of dimension $<n+k$; see Lemma \ref{lem:splitting}. This implies that the map (\ref{eqn:delooped}) is a (split) injection.  
\end{proof}
It follows from \cite{kupers} that in dimension $7$ the map $c^*:\Theta_7\to [M,\TopO]$ is injective for all closed $7$-manifolds $M$. Hence the inertia group of \textit{any} $7$-dimensional closed aspherical space form is trivial.

\begin{rmk}
The argument above shows that if $W\#\Sigma_1$ is diffeomorphic to $W\#\Sigma_2$, then $[\Sigma_1]=\pm[\Sigma_2]$ in $\Theta_n$. The sign depends on whether the diffeomorphism preserves or reverses orientation. (If $f:W\#\Sigma_1\to W\#\Sigma_2$ reverses orientation, then by taking connected sum with $\Sigma_2$ in the domain, we obtain a diffeomorphism $W\#\Sigma_1\#\Sigma_2\to W\#\Sigma_2\#\overline{\Sigma}_2\cong W$.) 
\end{rmk}

\subsection{Localization of $\TopO$.} For the proof of Theorem \ref{thm:asymmetric}, we will use the localization $\TopO_{(p)}$ of $\TopO$ at an odd prime $p$. The homotopy groups of the infinite loop space $\TopO_{(p)}$ are the $p$-torsion subgroups of $\Theta_n$:
\[\pi_n\big(\TopO_{(p)}\big)=\Theta_n\otimes\Z_{(p)}\]
When localized, the short exact sequence $0\rightarrow bP_{n+1}\rightarrow\Theta_n\rightarrow\Theta_n/bP_{n+1}\rightarrow0$ splits. In fact, there is a splitting on the level of spaces: 

\begin{thm}[Localization of $\TopO$]\label{thm:local-splitting}
Let $p$ be an odd prime. Then there are infinite loop spaces  $B=B(p)$ and $C=C(p)$, and infinite loop maps 
\[\beta:B\to\TopO_{(p)}\>\>\>\text{ and }\>\>\>\alpha:C\to\TopO_{(p)}\] 
such that the map 
\[B\times C\xrightarrow{\beta\times\alpha}\TopO_{(p)}\times\TopO_{(p)}\xrightarrow{\mathrm{multiplication}}\TopO_{(p)}\] is an equivalence of infinite loop spaces.
Furthermore, the map $\beta$ induces an isomorphism from $\pi_n(B)$ onto $bP_{n+1}\otimes\Z_{(p)}$.
\end{thm}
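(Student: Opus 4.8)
The plan is to work with connective spectra and split $\mathbf{Top/O}_{(p)}$ there, then apply $\Omega^\infty$; here $\mathbf{Top/O}$ denotes the spectrum with $\Omega^\infty\mathbf{Top/O}\simeq\TopO$, so that $\pi_n\mathbf{Top/O}\cong\Theta_n$ for $n\ge 5$ and $\pi_n\mathbf{Top/O}_{(p)}\cong\Theta_n\otimes\Z_{(p)}$ as recalled above. As a preliminary reduction, the Kirby--Siebenmann equivalence $\mathbf{Top/PL}\simeq\Sigma^3 H(\Z/2\Z)$ is contractible after inverting $2$, so the infinite-loop fibration $\PLO\to\TopO\to\TopPL$ yields $\mathbf{Top/O}_{(p)}\simeq\mathbf{PL/O}_{(p)}$; hence it suffices to split $\mathbf{PL/O}_{(p)}$ off its ``$bP$-part''.

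First I would pin down the $bP$-part using surgery. The fibration of infinite loop spaces $\PLO\to G/O\to G/PL$ gives a map of spectra $\mathbf{PL/O}\to\mathbf{G/O}$ whose effect on $\pi_n$ has kernel exactly $bP_{n+1}$, by the Kervaire--Milnor identification of $bP_{n+1}$ with the homotopy spheres bounding parallelizable manifolds (precisely those that die in $G/O$). Passing to $p$-localizations, this produces a cofiber sequence of infinite loop spectra
\[\mathbf{B}\xrightarrow{\ \beta\ }\mathbf{Top/O}_{(p)}\longrightarrow\mathbf{C}\]
realizing on homotopy the short exact sequence $0\to bP_{n+1}\otimes\Z_{(p)}\to\Theta_n\otimes\Z_{(p)}\to(\Theta_n/bP_{n+1})\otimes\Z_{(p)}\to 0$; here $\pi_*\mathbf{B}$ is concentrated in degrees $\equiv 3\pmod 4$, and $\pi_n\beta$ is, by construction, an isomorphism onto $bP_{n+1}\otimes\Z_{(p)}$. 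The maps $\beta$ and $\alpha$ of the statement will be this $\beta$ and a section of $\mathbf{Top/O}_{(p)}\to\mathbf{C}$, so the entire theorem reduces to showing the displayed cofiber sequence splits.

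The step I expect to be the main obstacle is exactly this splitting. Two ingredients feed into it. The first is Sullivan's computation that at an odd prime $\mathbf{G/PL}_{(p)}\simeq\bigvee_{k\ge 1}\Sigma^{4k}H\Z_{(p)}$ is a wedge of $p$-local integral Eilenberg--MacLane spectra (its $k$-invariants are $2$-primary), which makes the obstructions to splitting computable by ordinary cohomology. The second is the classical fact, due to Kervaire--Milnor and refined by Brumfiel, that the extension $0\to bP_{4k}\to\Theta_{4k-1}\to\Theta_{4k-1}/bP_{4k}\to 0$ splits after inverting $2$: a retraction $\Theta_{4k-1}\otimes\Z_{(p)}\to bP_{4k}\otimes\Z_{(p)}$ is furnished by the Eells--Kuiper $\mu$-invariant (equivalently Milnor's $\lambda$-invariant), a $\Q/\Z$-valued homomorphism on $\Theta_{4k-1}$ that restricts to an isomorphism on $bP_{4k}$ and, away from $2$, has image no larger than $\mu(bP_{4k})$, so that its kernel is a $p$-local complement. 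Because $\mu$ is computed from signatures and characteristic numbers of coboundaries, it is represented by a map of spectra into a product of Eilenberg--MacLane spectra, which is what upgrades the group-level splitting to a splitting of the cofiber sequence above. A clean reference for the resulting odd-primary decomposition of $\mathrm{PL}/\mathrm{O}$, of which this theorem is essentially a restatement, is Brumfiel's papers on $\pi_*(\mathrm{PL}/\mathrm{O})$ together with the book of Madsen--Milgram; alternatively one may simply quote the splitting from there.
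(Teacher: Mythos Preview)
The paper does not prove this theorem: it is quoted directly from Lance, with the single observation (which you also make) that $\TopO_{(p)}\simeq\PLO_{(p)}$ at odd primes because $\TopPL\simeq K(\Z/2\Z,3)$ has trivial $p$-localization. So your fallback---``simply quote the splitting'' from Brumfiel or Madsen--Milgram---is exactly what the paper does, only with a different citation.

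Your sketch of the underlying argument has the right ingredients (Sullivan's odd-primary identification of $G/\PL$, the Eells--Kuiper/Brumfiel splitting of the $bP$-extension), but the construction of the spectrum $\mathbf B$ from the fibration $\PLO\to G/\OO\to G/\PL$ is not quite right as stated. The fiber of $\mathbf{PL/O}_{(p)}\to\mathbf{G/O}_{(p)}$ is $\Omega(G/\PL)_{(p)}$, whose $n$-th homotopy group is $L_{n+1}(e)\otimes\Z_{(p)}\cong\Z_{(p)}$ in degrees $n\equiv 3\pmod 4$, not the finite group $bP_{n+1}\otimes\Z_{(p)}$; the boundary map $L_{n+1}(e)\to\Theta_n$ has \emph{image} $bP_{n+1}$, but that alone does not hand you a spectrum $\mathbf B$ with $\pi_n\mathbf B\cong bP_{n+1}\otimes\Z_{(p)}$ together with an infinite loop map $\beta$ to $\mathbf{PL/O}_{(p)}$. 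Producing $\mathbf B$ and $\beta$ requires the further input you gesture at---the odd-primary decomposition of $G/\OO$ and the realization of the $\mu$-invariant at the spectrum level---and that is precisely the content of Lance's (and Brumfiel's) argument. Since the paper treats this theorem as a black box, citing it is entirely appropriate; your sketch is a reasonable outline of what lies inside the box, modulo this one imprecise step.
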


This is proved in \cite[\S 5.]{lance-paper}. To compare the statement of Theorem \ref{thm:local-splitting} to what appears in \cite{lance-paper}, one should note that $\PLO_{(p)}\simeq\TopO_{(p)}$ when $p$ is an odd prime because $\TopPL_{(p)}\simeq K(\Z/2\Z,3)_{(p)}$ is contractible. (Localization preserves fibrations of simply connected spaces.) 

\subsection{The Atiyah--Hirzebruch spectral sequence.} The infinite loop space $B=B(p)$ from Theorem \ref{thm:local-splitting} defines spectrum and also a cohomology theory $\mathbb E^*$. In particular, for any space $W$, $\mathbb E^0(W)=[W,B]$. When $W$ is a closed manifold, the groups $\mathbb E^*(W)$ can be computed using the Atiyah--Hirzebruch spectral sequence. This is a spectral sequence with $E_2$ page 
\[E_2^{p,q}=H^p(W;\pi_{-q}(B)),\]
that converges to $\mathbb E^{p+q}(W)$. For general spaces $W$, the spectral sequence converges conditionally, but when $W$ is a closed manifold, the spectral sequence always converges.

\section{Actions on 7-dimensional aspherical space forms (Theorem \ref{thm:dim7})}\label{sec:dim7}

In this section we prove Theorem \ref{thm:dim7}. We briefly explain the strategy. Assuming that $G$ acts freely on $M\#\Sigma$, we use rigidity results (Mostow, Bieberbach, Farrell--Jones, Farrell--Hsiang) to find a free, isometric action of $G$ on $M$ such that, if $\pi:M\rightarrow M/G$ denotes the quotient map, then the smooth structure $M\#\Sigma$ is in the image of the homomorphism 
\[\pi^*:[M/G,\TopO]\rightarrow[M,\TopO].\] 
 
We would like to use this to show that $(M\#\Sigma)/G$ is diffeomorphic to $(M/G)\#\what\Sigma$ for some $\what\Sigma\in\Theta_n$. For this we study $\pi^*$ using the Atiyah--Hirzebruch spectral sequence. Our inability to resolve extension problems in the spectral sequence ultimately forces us to restrict to dimension 7. We argue in two steps, the first of which deals the hyperbolic and flat cases separately.

\subsection*{Step 1: rigidity} In this step we prove Proposition \ref{prop:rigidity}. The proposition does not use the assumption $\dim M=7$ and will be used again for the proof of Theorems \ref{thm:tori} and \ref{thm:asymmetric}. 

\begin{prop}\label{prop:rigidity} 
Fix an exotic $n$-sphere $\Sigma$ and fix an aspherical space form $M$ of dimension $n$. Assume that $M\#\Sigma$ has a faithful, free action of a finite group $G$. 
\begin{enumerate}
\item[(i)] There exists an isometric action of $G$ on $M$ and an equivariant homeomorphism $M\#\Sigma\rightarrow M$.
\item[(ii)] Denoting the quotient map $\pi:M\rightarrow M/G$, then the smooth structure $M\#\Sigma$ is in the image of the homomorphism
\[\pi^*:[M/G,\TopO]\rightarrow[M,\TopO].\]
\end{enumerate} 
\end{prop}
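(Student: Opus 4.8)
\textbf{Proof proposal for Proposition \ref{prop:rigidity}.}

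The plan is to use the various rigidity theorems to promote the given action on $M\#\Sigma$ to an isometric action on $M$, and then to transport the smooth structure statement across the resulting equivariant homeomorphism. For part (i), I would first note that the group $G$ acts on $\pi_1(M\#\Sigma)\cong\pi_1(M)$, giving a homomorphism $G\to\Out(\pi_1(M))$. This is injective: since the action on $M\#\Sigma$ is free, the associated fibration-type argument (or more simply, the fact that a nontrivial element acting trivially on $\pi_1$ of an aspherical manifold has a fixed point, by Borel's lemma on actions inducing the identity on $\pi_1$) forces the kernel to be trivial. Now use rigidity to realize this $\Out$-subgroup by isometries: when $M$ is hyperbolic this is Mostow rigidity (every automorphism of $\pi_1$ is induced by an isometry, and the isometry group injects into $\Out$), and when $M$ is flat this is Bieberbach's second theorem (automorphisms are induced by affine diffeomorphisms, and one can average a flat metric to make these isometries). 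Either way we obtain an isometric action $G\curvearrowright M$ inducing the same homomorphism to $\Out(\pi_1(M))$ as the original action. It remains to produce a $G$-equivariant homeomorphism $M\#\Sigma\to M$. Both manifolds are closed aspherical with isomorphic fundamental group, so they are homeomorphic by the Borel conjecture (Farrell--Hsiang in the flat case, Farrell--Jones in the hyperbolic case). To make the homeomorphism equivariant, pass to quotients: $(M\#\Sigma)/G$ and $M/G$ are closed aspherical manifolds with isomorphic fundamental groups (the common extension of $\pi_1(M)$ by $G$ determined by the two actions, which agree as abstract extensions since the two actions induce the same map to $\Out$), so by the Borel conjecture for these quotients there is a homeomorphism $(M\#\Sigma)/G\to M/G$ inducing the identity on $\pi_1$; lifting to universal covers and then descending to the intermediate $G$-covers yields the desired equivariant homeomorphism $M\#\Sigma\to M$.

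For part (ii), let $\pi:M\to M/G$ be the quotient of the isometric action and let $\bar\pi:M\#\Sigma\to (M\#\Sigma)/G$ be the quotient of the given free action. Using the equivariant homeomorphism from part (i), identify $M\#\Sigma\to M/G$ (the composite of $\bar\pi$ with the homeomorphism to $M$) with a $G$-cover of $M/G$ equivalent to $\pi$. The point is that the smooth structure on $M\#\Sigma$, viewed as an element of $S(M)\cong[M,\TopO]$, is pulled back from a smooth structure on $M/G$: indeed, the smooth manifold $M\#\Sigma$ is a smooth $G$-cover of the smooth manifold $(M\#\Sigma)/G$, and transporting smooth structures through the equivariant homeomorphism, the smoothing $M\#\Sigma$ of the topological manifold $M$ is obtained by pulling back the smoothing $(M\#\Sigma)/G$ of the topological manifold $M/G$ along $\pi$. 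In smoothing-theoretic terms, the classifying map $M\to\TopO$ of $M\#\Sigma$ factors as $M\xrightarrow{\pi} M/G\to\TopO$, which is exactly the statement that $[M\#\Sigma]\in\im(\pi^*)$. The one subtlety to address is the marking/concordance issue: $S(W)\cong[W,\TopO]$ requires a choice of base smoothing, and the equivalence of covers in part (i) is only a homeomorphism, so one should invoke Lemma \ref{lem:concordance} (applicable since $M$, and $M/G$, satisfy the Borel conjecture and have $\Out(\pi_1)$ realized by diffeomorphisms, by Remark \ref{rmk:concordance}) to ensure that the concordance class of $M\#\Sigma$ is well-defined independent of the marking, so that naturality of $S(-)\cong[-,\TopO]$ under the covering map gives the factorization.

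I expect the main obstacle to be the bookkeeping in part (i) around making the homeomorphism equivariant — specifically, verifying that the two group extensions $1\to\pi_1(M)\to\pi_1((M\#\Sigma)/G)\to G\to 1$ and $1\to\pi_1(M)\to\pi_1(M/G)\to G\to 1$ are isomorphic as extensions (not merely that the quotient groups are abstractly isomorphic), which is what lets the Borel conjecture for the quotients produce a $\pi_1$-preserving homeomorphism that lifts to an equivariant one. This follows because both extensions are classified by the same element of $H^2(G;Z(\pi_1 M))=H^2(G;0)$ plus the same homomorphism $G\to\Out(\pi_1 M)$ — and $\pi_1(M)$ has trivial center for $M$ hyperbolic, while for $M$ flat one argues directly via Bieberbach (the crystallographic group structure). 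The remaining steps are essentially formal, being repeated applications of rigidity plus the naturality of the smoothing-theory classification.
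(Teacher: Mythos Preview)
Your overall strategy---realize $G$ isometrically on $M$ via rigidity, apply the Borel conjecture to the quotients to get an equivariant homeomorphism, then use naturality of smoothing theory for (ii)---is the paper's strategy, and your treatment of the hyperbolic case is essentially the paper's (the paper's ``Claim'' that the isometric $G$-action on $M$ is free is a consequence of your observation that the extension is uniquely determined and hence isomorphic to the torsion-free group $\pi_1((M\#\Sigma)/G)$; you should say this explicitly before calling $M/G$ a closed aspherical manifold).

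The flat case, however, has a real gap. Your extension argument relies on $H^2(G;Z(\pi_1 M))=0$, but for flat $M$ the center of $\pi_1(M)$ is nontrivial (it is all of $\Z^n$ when $M=T^n$), so $H^2(G;Z(\pi_1 M))$ need not vanish and there can be several inequivalent extensions of $G$ by $\pi_1(M)$ with the same outer action. Your step ``realize $G<\Out(\pi_1 M)$ by affine diffeomorphisms'' chooses \emph{one} such extension, with no reason for it to coincide with $\pi_1((M\#\Sigma)/G)$; if it does not, the quotients have non-isomorphic fundamental groups and the Borel-conjecture comparison fails. The paper avoids this by reversing the order of the argument: rather than first producing an action on $M$ and then comparing, it observes that $\pi_1((M\#\Sigma)/G)$ is torsion-free with a finite-index normal free abelian subgroup, hence is itself a Bieberbach group by Auslander--Kuranishi, so it is $\pi_1(\overline{M})$ for some flat manifold $\overline{M}$. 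The Borel conjecture (Farrell--Hsiang) then gives a homeomorphism $(M\#\Sigma)/G\to\overline{M}$, and the cover of $\overline{M}$ with fundamental group $\pi_1(M)$ is affinely diffeomorphic to $M$ by Bieberbach's second theorem; the deck group of $M\to\overline{M}$ is the desired isometric $G$-action, and by construction it yields the correct extension. Your phrase ``one argues directly via Bieberbach'' gestures at this but misses the essential ingredient (Auslander--Kuranishi) and, more importantly, the change in logical order: the isometric action is \emph{extracted from} the quotient, not constructed first and then matched to it.
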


Proposition \ref{prop:rigidity}(i) is similar to part of \cite[Thm.\ 1.5]{CLW}, which says that if $\Isom^+(M)$ acts freely on $M$, then every finite subgroup of $\Homeo^+(M)$ is conjugate into $\Isom^+(M)$. However, in Proposition \ref{prop:rigidity}, we do not assume that the full group $\Isom^+(M)$ acts freely; nevertheless our proof of Proposition \ref{prop:rigidity} is similar to the corresponding part of the proof of \cite[Thm.\ 1.5]{CLW}. 

\begin{proof}[Proof of Proposition \ref{prop:rigidity}] We treat the hyperbolic and flat cases separately, but the proofs are similar.  

\textit{Hyperbolic case:} Assume first that $M$ is a hyperbolic manifold. The action $G\curvearrowright M\#\Sigma$ induces a homomorphism 
\[G\rightarrow\Out\big(\pi_1(M\#\Sigma)\big),\] which is injective by \cite{borel-isometry-aspherical}. By Mostow rigidity $\Out(\pi_1(M))\cong\Isom(M)$, so we obtain an isometric action of $G$ on $M$. To prove $(i)$, we construct an equivariant homeomorphism $M\#\Sigma\to M$. 

{\it Claim.} $G$ acts freely on $M$. 

{\it Proof of Claim.} Consider the following pullback diagram induced by the inclusion $G\hookrightarrow\Out(\pi_1(M))$. 
\begin{equation}\label{eqn:universal-ext}
\begin{xy}
(-80,0)*+{1}="A";
(-60,0)*+{\pi_1(M)}="B";
(-30,0)*+{\Aut(\pi_1(M))}="C";
(0,0)*+{\Out(\pi_1(M))}="D";
(20,0)*+{1}="E";
(-80,-15)*+{1}="F";
(-60,-15)*+{\pi_1(M)}="G";
(-30,-15)*+{\Gamma}="H";
(0,-15)*+{G}="I";
(20,-15)*+{1}="J";
{\ar"A";"B"}?*!/_3mm/{};
{\ar "B";"C"}?*!/_3mm/{};
{\ar "C";"D"}?*!/^3mm/{};
{\ar "D";"E"}?*!/^3mm/{};
{\ar"F";"G"}?*!/_3mm/{};
{\ar "G";"H"}?*!/_3mm/{};
{\ar "H";"I"}?*!/^3mm/{};
{\ar "I";"J"}?*!/^3mm/{};
{\ar@{=} "B";"G"}?*!/_3mm/{};
{\ar "H";"C"}?*!/_3mm/{};
{\ar@{^{(}->} "I";"D"}?*!/_3mm/{};
\end{xy}\end{equation}
By Mostow rigidity, the extension in the top row in the Diagram (\ref{eqn:universal-ext}) is equivalent to the extension
\[1\rightarrow\pi_1(M)\rightarrow N(\pi_1(M))\rightarrow\Isom(M)\rightarrow 1,\] where $N(\pi_1(M))$ denotes the normalizer in $\Isom(\mathbb H^n)$. Consequently, $\Gamma$ can be identified with the group of all lifts of isometries of $G<\Isom(M)$ to the universal cover $\mathbb H^n$. Then $G$ acts freely on $M$ if and only if $\Gamma$ is torsion-free. We prove the Claim by showing $\Gamma$ is torsion free. 

To show that $\Gamma$ is torsion free, we give another description of the extension of $G$ in (\ref{eqn:universal-ext}). Recall that $G$ acts freely on $M\#\Sigma$, so the quotient $M\#\Sigma\rightarrow (M\#\Sigma)/G$ is a covering map, which determines an extension 
\[1\rightarrow\pi_1(M\#\Sigma)\rightarrow\pi_1\big((M\#\Sigma)/G\big)\rightarrow G\rightarrow 1.\]
By construction, the homomorphism $G\to\Out\big(\pi_1(M\#\Sigma)\big)$ that classifies this extension, induces an isomorphism of extensions 
\[\begin{xy}
(-70,0)*+{1}="A";
(-50,0)*+{\pi_1(M\#\Sigma)}="B";
(-20,0)*+{\pi_1\big((M\#\Sigma)/G\big)}="C";
(10,0)*+{G}="D";
(30,0)*+{1}="E";
(-70,-15)*+{1}="F";
(-50,-15)*+{\pi_1(M)}="G";
(-20,-15)*+{\Gamma}="H";
(10,-15)*+{G}="I";
(30,-15)*+{1}="J";
{\ar"A";"B"}?*!/_3mm/{};
{\ar "B";"C"}?*!/_3mm/{};
{\ar "C";"D"}?*!/^3mm/{};
{\ar "D";"E"}?*!/^3mm/{};
{\ar"F";"G"}?*!/_3mm/{};
{\ar "G";"H"}?*!/_3mm/{};
{\ar "H";"I"}?*!/^3mm/{};
{\ar "I";"J"}?*!/^3mm/{};
{\ar "B";"G"}?*!/_3mm/{\cong};
{\ar "C";"H"}?*!/_3mm/{\cong};
{\ar@{=} "D";"I"}?*!/_3mm/{};
\end{xy}\]
The group $\pi_1\big((M\#\Sigma)/G\big)$ is torsion-free because $(M\#\Sigma)/G$ is a closed aspherical manifold. Thus $\Gamma$ is torsion free. This completes the proof of the Claim. 

It remains to obtain an equivariant homeomorphism $M\#\Sigma\rightarrow M$. By the Borel conjecture for hyperbolic manifolds \cite[Cor.\ 10.5]{FJ-borel-hyperbolic}, the isomorphism $\pi_1\big((M\#\Sigma)/G\big)\cong\Gamma\cong\pi_1(M/G)$ is induced by a homeomorphism $f:(M\#\Sigma)/G\cong M/G$. By construction, this homeomorphism lifts to an equivariant homeomorphism $\wtil f:M\#\Sigma\rightarrow M$, as desired. This completes the proof of $(i)$.

In order to prove $(ii)$, we note from part $(i)$ that we have the a commutative diagram 
\begin{equation}\label{eqn:rigidity-hyperbolic}
\begin{xy}
(10,0)*+{M}="B";
(10,-15)*+{M/G}="D";
(-20,0)*+{M\#\Sigma}="E";
(-20,-15)*+{(M\#\Sigma)/G}="F";
{\ar "B";"D"}?*!/_3mm/{\pi};
{\ar "E";"F"}?*!/_3mm/{};
{\ar "E";"B"}?*!/_3mm/{\wtil f};
{\ar "F";"D"}?*!/_3mm/{f};
\end{xy}\end{equation}
The vertical maps are covering maps, and the horizontal maps are homeomorphisms. 
Consider the induced map 
\[\pi^*:S(M/G)\cong[M/G,\TopO]\xrightarrow{}[M,\TopO]\cong S(M)\]
Observe that $\pi^*$ sends an arbitrary element $(W,g)\in S(M/G)$ to $(\what W,\what g)\in S(M)$, where $\what W$ is the pullback covering space with the smooth structure such that $\what W\rightarrow W$ is smooth; see the following diagram. 
\[\begin{xy}
(-10,0)*+{\what W}="A";
(15,0)*+{M}="B";
(-10,-15)*+{W}="C";
(15,-15)*+{M/G}="D";
{\ar"A";"B"}?*!/_3mm/{\what g};
{\ar "B";"D"}?*!/_3mm/{\pi};
{\ar "A";"C"}?*!/^3mm/{};
{\ar "C";"D"}?*!/^3mm/{g};
\end{xy}\]
From this we deduce that $\pi^*$ sends $\big((M\#\Sigma)/G, f)$ to $(M\#\Sigma,\wtil f)$, which shows $M\#\Sigma$ is in the image of $\pi^*$. 

\textit{Flat case:} Assume now that $M$ is flat and let $\Gamma$ denote its fundamental group. The proof is similar to the hyperbolic case, but it uses different rigidity results. Since $G$ acts freely on $M^n\#\Sigma$, the quotient $M^n\#\Sigma\to(M^n\#\Sigma)/G$ is a covering map, so there is an exact sequence
\[1\rightarrow\Gamma\rightarrow\pi_1\big((M\#\Sigma)/G\big)\rightarrow G\rightarrow1.\]
By \cite[Thm.\ 1]{auslander-kuranishi}, the group $\pi_1\big((M\#\Sigma)/G\big)$ is the fundamental group of a flat manifold $\ov M$. Consider the corresponding extension of $\pi_1(\ov M)$. 
\[\begin{xy}
(-80,0)*+{1}="A";
(-60,0)*+{\Gamma}="B";
(-30,0)*+{\pi_1\big((M\#\Sigma)/G\big)}="C";
(0,0)*+{G}="D";
(20,0)*+{1}="E";
(-80,-15)*+{1}="F";
(-60,-15)*+{\Gamma}="G";
(-30,-15)*+{\pi_1(\ov M)}="H";
(0,-15)*+{G}="I";
(20,-15)*+{1}="J";
{\ar"A";"B"}?*!/_3mm/{};
{\ar "B";"C"}?*!/_3mm/{};
{\ar "C";"D"}?*!/^3mm/{};
{\ar "D";"E"}?*!/^3mm/{};
{\ar"F";"G"}?*!/_3mm/{};
{\ar "G";"H"}?*!/_3mm/{};
{\ar "H";"I"}?*!/^3mm/{};
{\ar "I";"J"}?*!/^3mm/{};
{\ar@{=} "B";"G"}?*!/_3mm/{};
{\ar "H";"C"}?*!/^3mm/{\cong};
{\ar@{=} "I";"D"}?*!/^3mm/{};
\end{xy}\]
By the Borel conjecture for flat manifolds \cite{farrell-hsiang}, the isomorphism $\pi_1(M/G)\cong\pi_1(\ov M)$ above is induced by a homeomorphism $f:(M\#\Sigma)/G\to\ov M$. By construction, this homeomorphism lifts to an equivariant homeomorphism $\wtil f:M\#\Sigma\rightarrow M$ between the covering spaces with fundamental group $\Gamma$ (the cover of $\ov M$ is $M$ by Bieberbach's second theorem \cite[Ch.\ I, Thm.\ 4.1]{charlap}). By construction, the codomain $M$ has a flat metric induced from $\ov M$ on which $G$ acts isometrically. This proves $(i)$. 

To show $(ii)$, we consider the following diagram 
\begin{equation*}
\begin{xy}
(10,0)*+{M}="B";
(10,-15)*+{\ov M}="D";
(-20,0)*+{M\#\Sigma}="E";
(-20,-15)*+{(M\#\Sigma)/G}="F";
{\ar "B";"D"}?*!/_3mm/{\pi};
{\ar "E";"F"}?*!/_3mm/{};
{\ar "E";"B"}?*!/_3mm/{\wtil f};
{\ar "F";"D"}?*!/_3mm/{f};
\end{xy}\end{equation*}
As in the proof of the previous case, from this diagram, we conclude that the homomorphism
\[\pi^*:S(\ov M)\cong[\ov M,\TopO]\xrightarrow{}[M,\TopO]\cong S(M)\]
sends $\big((M\#\Sigma)/G,f\big)$ to $(M\#\Sigma,\wtil f)$, so $M\#\Sigma$ is in the image of $\pi^*$. 

This completes the proof of Proposition \ref{prop:rigidity}.
\end{proof}

\subsection*{Step 2: computation with the Atiyah--Hirzebruch spectral sequence} 

Consider the Atiyah--Hirzebruch spectral sequence for the cohomology theory $\mathbb{E}^*$ determined by the infinite loop space $\TopO$, so that $\mathbb{E}^0(M)=[M,\TopO]$. This spectral sequence has $E_2$-page
\[E_2^{p,q}=H^p(M;\pi_{-q}(\TopO)).\]
Recall that $\pi_k(\TopO)=\Theta_k$ if $k\neq 3$ and $\pi_3(\TopO)\cong\Z/2\Z$. In particular
$\pi_k(\TopO)=0$ for $k\in\{0,1,2,4,5,6\}$. Thus the spectral sequence   yields an exact sequence
\[H^7(M;\Theta_7)\xrightarrow{c}[M,\TopO]\xrightarrow{q} H^3(M;\Z/2\Z)\rightarrow0\]
Identifying $H^7(M;\Theta_7)\cong\Theta_7$, the homomorphism $c$ is the map $\Sigma\mapsto(M\#\Sigma,\iota)$. (Aside: We will not need to interpret the homomorphism $q$, but it can be identified with the natural map $[M,\TopO]\rightarrow[M,\TopPL]$ that sends a smooth structure to its corresponding PL structure.)

The map $\pi:M\rightarrow M/G$ induces a map of spectral sequences and a commutative diagram
\beq
\xymatrix{
\Theta_7\ar[r]^-{c}\ar[d]^{\cdot |G|} & [M/G,\TopO]\ar[d]^{\pi^*}\ar[d]^{\pi^*}\ar[r]^-{q} &H^3(M/G;\Z/2\Z)\ar[d]^{\pi^*}\\
\Theta_7\ar[r]^-{c}&[M,\TopO]\ar[r]^-{q}&H^3(M;\Z/2\Z)
}
\eeq

The left vertical map is multiplication by $|G|$, which is the degree of the cover $\pi$. As $|G|$ is odd, the right vertical map is injective by a transfer argument \cite[Prop.\ 3G.1]{hatcher}. 

To complete the proof of Theorem \ref{thm:dim7}, we note that by Step 1, $M\#\Sigma=\pi^*(x)$, where $x=((M\#\Sigma)/G,f)$; c.f.\ Diagram (\ref{eqn:rigidity-hyperbolic}).  Since $M\#\Sigma=c(\Sigma)$, we have $0=q(\pi^*(x))=\pi^*(q(x))$, which implies that $q(x)=0$ since the right vertical map is injective. Thus $x=c(\what\Sigma)$ for some $\Sigma\in\Theta_7$, i.e.\ $(M\#\Sigma)/G$ is concordant, hence diffeomorphic (by \cite[Essay I, Theorem 4.1]{kirby-siebenmann}), to $(M/G)\#\what\Sigma$. By Proposition \ref{prop:rigidity}(i), the action $G\curvearrowright M\#\Sigma$ is topologically conjugate to an isometric action on $M$. Therefore, we can apply Lemma \ref{lem:standard-actions} to conclude that the action of $G$ on $M\#\Sigma$ is standard.

To prove Addendum \ref{add:hyperbolic7} we use the fact, shown in \cite{kupers}, that for all closed $7$-manifolds $W$ there is a splitting  
\beq
[W,\TopO]\cong H^7(W;\Theta_7)\oplus H^3(W;\Z/2\Z).
\eeq
With respect to this splitting the map $\pi^*$ is diagonal. Hence writing 
$$M\#\Sigma=(a,0)\in H^7(M;\Theta_7)\oplus H^3(M;\Z/2\Z)$$
with $a\neq 0$ and
$$x=(u,v)\in H^7(M;\Theta_7)\oplus H^3(M;\Z/2\Z),$$ 
 we have
$\pi^*(u,v)=(a,0)$ and $u\neq 0$. Since $\pi$ is a degree $p$ map, we have $\pi^*(u,0)=(p\cdot u,0)$ and the divisibility property follows. \qed
\section{Two facts about flat manifolds} \label{sec:flat}

In this section we prove two results that are needed in preparation for the proof of Theorem \ref{thm:tori}. These results (Propositions \ \ref{prop:mapping-torus} and \ref{prop:parallelizable}) concern the structure of flat manifolds with prime cyclic holonomy. 

\begin{prop}[Prime cyclic holonomy implies mapping torus]\label{prop:mapping-torus}
Fix a prime $p$. Let $N$ be a compact flat manifold with holonomy group $G=\Z/p\Z$. Then $N$ has the structure of a mapping torus 
\[T^{n-1}\rightarrow N\rightarrow S^1\] 
whose monodromy has order $p$. 
\end{prop}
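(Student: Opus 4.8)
The plan is to analyze the short exact sequence coming from the Bieberbach structure of $N$. Let $\Gamma=\pi_1(N)$. Since $N$ is flat with holonomy $G=\Z/p\Z$, the first Bieberbach theorem \cite[Ch.\ I, Thm.\ 3.1]{charlap} gives a short exact sequence
\[1\rightarrow\Z^n\rightarrow\Gamma\xrightarrow{\rho}\Z/p\Z\rightarrow1,\]
where $\Z^n$ is the (unique) maximal normal abelian subgroup, and the conjugation action of $\Gamma$ on $\Z^n$ factors through a faithful holonomy representation $\phi\colon\Z/p\Z\hookrightarrow\GL_n(\Z)$. The goal is to produce a surjection $\Gamma\onto\Z$ with finitely generated free kernel that is the fundamental group of a flat $(n-1)$-manifold; realizing this kernel as a flat manifold and invoking asphericity of $N$ then exhibits $N$ as the corresponding mapping torus $T^{n-1}\to N\to S^1$, with monodromy the deck transformation, whose order I must check is exactly $p$.

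The key steps, in order. \textbf{Step 1: construct the fibering homomorphism.} Pick any $g\in\Gamma$ with $\rho(g)$ a generator of $\Z/p\Z$. Since $\Gamma$ is torsion-free, $g$ has infinite order, and $g^p\in\Z^n$ is a nonzero vector $v\in\Z^n$ (nonzero because $g$ has infinite order). The subgroup $\langle g\rangle\cap\Z^n=\langle v\rangle$ is infinite cyclic. \textbf{Step 2: split off a complementary lattice.} I want a $\langle g\rangle$-invariant (equivalently $G$-invariant via $\phi$) direct-sum decomposition $\Z^n=L\oplus\Lambda$ with $v\in L$, $L$ of rank $1$... but $v$ need not be a $G$-fixed primitive vector, so instead I should work rationally first: over $\Q$, the fixed subspace $(\Q^n)^{G}$ is a $G$-subrepresentation with $G$-invariant complement (Maschke, or just averaging), and $v\in(\Q^n)^G$ since $g$ acts on $\langle v\rangle$ trivially. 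Intersecting with $\Z^n$ gives a $G$-invariant primitive sublattice $\Z^n\cap(\Q^n)^G$ containing a multiple of $v$; call its rank $k\geq1$. \textbf{Step 3: produce a homomorphism $\Gamma\to\Z$.} Choose a $G$-equivariant projection $\Z^n\to\Z$ onto a rank-$1$ summand of the fixed lattice along which $v$ maps to a positive integer; because this projection kills the commutators $[\gamma,x]=\phi(\rho(\gamma))x-x$ it extends over $\Gamma$ to a homomorphism $\psi\colon\Gamma\to\Z$ (using that $\Gamma$ is generated by $\Z^n$ together with $g$, and $\psi(g)$ can be assigned consistently since $\psi(g^p)=\psi(v)\neq0$). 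Rescale so that $\psi$ is surjective. \textbf{Step 4: identify the kernel.} Set $\Gamma'=\ker\psi$. Then $\Gamma'\cap\Z^n$ is a rank-$(n-1)$ lattice (the complement picked up in Steps 2–3), it is normal in $\Gamma'$, and $\Gamma'/(\Gamma'\cap\Z^n)$ embeds in $G=\Z/p\Z$; so $\Gamma'$ is a torsion-free group which is either free abelian of rank $n-1$ or a Bieberbach group of dimension $n-1$. In either case $\Gamma'=\pi_1(T^{n-1})$ or $\pi_1$ of a flat $(n-1)$-manifold — and since the statement allows an arbitrary flat $T^{n-1}$-bundle... actually the statement says $T^{n-1}\to N\to S^1$, so I must ensure the fiber is a \emph{torus}; this forces me to arrange in Step 3 that $\Gamma'\cap\Z^n=\Gamma'$, i.e.\ that $\psi$ restricted to a $G$-orbit transversal hits $\Z$, which can be done by instead choosing $\psi$ so that $g^p\mapsto$ a generator of $\Z$ while $g$ itself maps into $\Z$ — equivalently, replace $g$ by a better coset representative. \textbf{Step 5: assemble the mapping torus.} With $\Gamma'\cong\Z^{n-1}$, the extension $1\to\Z^{n-1}\to\Gamma\to\Z\to1$ is classified by the automorphism $\mu=$ conjugation by a lift of $1\in\Z$, and $\Gamma\cong\Z^{n-1}\rtimes_\mu\Z$ exhibits $N$ (being aspherical with this $\pi_1$) as the mapping torus of the self-diffeomorphism of $T^{n-1}$ inducing $\mu$. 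Finally, $\mu^p$ is an inner automorphism of $\Gamma'$ (conjugation by $v\in\Z^{n-1}$, hence trivial since $\Z^{n-1}$ abelian), so $\mu$ has order dividing $p$ in $\Out(\Z^{n-1})=\GL_{n-1}(\Z)$; it has order exactly $p$ because otherwise $\mu$ itself would be realized by translation, making the holonomy of $N$ trivial, contradicting $G=\Z/p\Z$.

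\textbf{Main obstacle.} The delicate point is Step 3–4: finding a homomorphism $\psi\colon\Gamma\to\Z$ whose kernel is exactly the translation lattice $\Z^{n-1}$ (so that the fiber is a genuine torus and not merely a flat manifold), rather than a proper-finite-index overgroup of that lattice inside $\Gamma'$. This requires choosing the coset representative $g$ and the $G$-equivariant rank-$1$ quotient of $\Z^n$ compatibly, so that $g^p$ generates the target $\Z$; the integrality here (as opposed to the easy rational statement) is where a short argument with primitive vectors in the fixed sublattice $(\Z^n)^G$ is needed, using that $p$ is prime to control the index $[\Z^n:(\Z^n)^G\oplus\text{(complement)}]$.
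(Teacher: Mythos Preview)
Your overall strategy matches the paper's, but Step~3 contains a genuine error that your ``Main obstacle'' paragraph does not repair. A $G$-equivariant surjection $\psi_0:\Z^n\to\Z$ extends to $\psi:\Gamma\to\Z$ if and only if $\psi_0(v)\in p\Z$ (since one must have $p\,\psi(g)=\psi(g^p)=\psi_0(v)$), not merely if $\psi_0(v)\neq0$; and when it does extend, surjectivity of $\psi_0$ forces $\Gamma=(\ker\psi)\cdot\Z^n$, hence $\ker\psi/(\ker\psi\cap\Z^n)\cong G$, so the fiber is a genuine Bieberbach group rather than a torus. Thus ``$\psi_0$ extends'' and ``$\ker\psi\subset\Z^n$'' are mutually exclusive. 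Your proposed remedy of arranging that $g^p$ maps to a generator of $\Z$ is impossible for the same reason ($\psi(g^p)=p\,\psi(g)$ is never a unit), and the weaker goal of making $v$ primitive under the projection can also fail: for $A=\Z\oplus\Z[\zeta_p]$ with $v=(2,0)$ and $p$ odd, every $G$-equivariant $q:A\to\Z$ has $q(v)$ even.

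The paper resolves the obstacle differently. One does \emph{not} extend a map from $\Z^n$; instead one chooses a $G$-equivariant surjection $q:\Z^n\to\Z$ with $q(v)\not\equiv0\pmod p$, sets $B=\ker q$ (automatically normal in $\Gamma$), and observes that $1\to\Z\to\Gamma/B\to\Z/p\Z\to1$ is then nonsplit, whence $\Gamma/B\cong\Z$; the quotient map $\Gamma\to\Gamma/B$ is the desired surjection with kernel $B\cong\Z^{n-1}$. The existence of such a $q$ is the real content: the extension class $\xi\in H^2(G;\Z^n)$ is nonzero (torsion-freeness of $\Gamma$), and one needs $q_*(\xi)\neq0$ in $H^2(G;\Z)\cong\Z/p$. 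Writing $\delta=1+g+\cdots+g^{p-1}$ and $A'=\ker(m_\delta:\Z^n\to\Z^n)$, the paper computes $(A')^G=0$, hence $H^2(G;A')=0$, so $H^2(G;\Z^n)$ injects into $H^2(G;\im m_\delta)$; since $\im m_\delta$ is a trivial $G$-module, projecting onto a suitable $\Z$-summand produces the required $q$.
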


\begin{proof}[Proof of Proposition \ref{prop:mapping-torus}]

By assumption $\pi_1(N)$ is isomorphic to a discrete torsion-free subgroup $\Gamma<\Isom^+(\R^n)\cong\R^n\rtimes\SO(n)$, and $\Gamma$ is torsion-free because $N$ is an aspherical manifold (not an orbifold). 

{\it Step $1$.} First we construct a surjection $\Gamma\to\Z$ with kernel $\Z^{n-1}$. This step uses an argument of \cite[Lem.\ 1]{cliff-weiss}. 

Let $A<\Gamma$ be the maximal abelian subgroup, $A\cong\Z^n$. There is a short exact sequence 
\begin{equation}\label{eqn:extension}1\rightarrow A\rightarrow\Gamma\xrightarrow{} G\rightarrow 1.\end{equation}
Let $\xi\in H^2(G;A)$ denote the Euler class of the extension (c.f.\ \cite[Ch.\ IV, Thm.\ 3.12]{brown}), where the coefficients $A$ has the $\Z[G]$-module structure coming from the extension (\ref{eqn:extension}). We know that $\xi\neq0$ because otherwise $\Gamma\cong\Z^n\rtimes G$ is not torsion-free. 

Fix a generator $g$ of $G$, and set 
\[\delta=1+g+\cdots+g^{p-1}\in\Z[G].\]
For a fixed $\Z[G]$ module $M$, write $m_\delta:M\to M$ for the homomorphism defined by 
$m_\delta(x)=\delta\cdot x$. Recall that  
\begin{equation}\label{eqn:H2}H^2(G;M)\cong M^G/\im\big[m_\delta:M\to M\big]\end{equation} by the standard resolution for computing cohomology of cyclic groups (c.f.\ \cite[I.6]{brown}). 

Set 
\[A'=\ker\big[m_\delta:A\to A\big]\>\>\>\text{ and }\>\>\>A''=\im\big[m_\delta:A\to A\big],\] so there is a short exact sequence of $\Z[G]$ modules
\[0\rightarrow A'\rightarrow A\rightarrow A''\rightarrow0.\]
Consider the following portion of the associated long exact sequence in cohomology with coefficients
\[H^2(G;A')\rightarrow H^2(G;A)\rightarrow H^2(G;A'')\]
Observe that $(A')^G=0$ (if $a\in A'$ and $g\cdot a=a$, then $0=m_\delta(a)=p\cdot a$). Then 
\[H^2(G;A')=0\] by (\ref{eqn:H2}), which implies that $H^2(G;A)\rightarrow H^2(G;A'')$ is injective. Furthermore, $A''$ is a trivial module (because $g\cdot\delta=\delta$), so 
\[H^2(G;A'')\cong H^2(G;\Z)\oplus\cdots\oplus H^2(G;\Z).\] 
Since $\xi\neq0$, there is surjective composition $q:A\onto A''\onto\Z$ so that the image of $\xi$ under the induced homomorphism 
\[q_*:H^2(G;A)\hookrightarrow H^2(G;A'')\rightarrow H^2(G;\Z)\]
is nonzero. 

Set $B=\ker(q)$. This is a $\Z[G]$-submodule of $A$, which implies that $B$ is a normal subgroup of $\Gamma$. By construction, the extension 
\begin{equation}\label{eqn:Gamma-mod-B}1\rightarrow A/B\rightarrow\Gamma/B\rightarrow\Gamma/A\rightarrow 1\end{equation}
does not split (its Euler class is $q_*(\xi)\neq0$). Here $\Gamma/A\cong G\cong\Z/p\Z$ and $A/B\cong\Z$ so the preceding central extension has the form
\[1\to\Z\to\Gamma/B\to\Z/p\Z\to1.\]
This implies $\Gamma/B\cong\Z$ . 
 
Thus we have a surjection $\Gamma\to\Gamma/B\cong\Z$ with kernel $B\cong\Z^{n-1}$, as desired. 

{\it Step $2$.} We explain why the short exact sequence $1\to B\to\Gamma\to\Z\to1$ from Step 1 is realized topologically as a fiber bundle $T^{n-1}\to N\to S^1$. For this, it suffices to show that $N$ is obtained from a quotient of $T^{n-1}\times\R$ by a homeomorphism of the form \begin{equation}\label{eqn:monodromy}h(x,t)=(h_1(x),t+t_0)\end{equation}
for some $h_1\in\Homeo(T^{n-1})$ and $t_0\neq0\in\R$. 

Consider the action of $\Gamma$ on $\R^n$. Let $V\subset\R^n$ be the subspace spanned by the orbit of $0\in\R^n$ under $B$. Decompose $\R^n=V\oplus V^{\perp}$. Since $B$ is normal in $\Gamma$, the action of $\Gamma$ on $\R^n$ descends to an action of $\Gamma/B\cong\Z$ on 
$\R^n/B\cong V/B\times V^\perp\cong T^{n-1}\times\R$; furthermore, $N$ is the quotient of this action. 

Fix $\gamma\in\Gamma$ that projects to a generator under $\Gamma\to\Z$. Write $\gamma=(u,f)\in\R^n\rtimes\SO(n)$. The element $f\in\SO(n)$ generates the image of the holonomy $\Gamma\to\SO(n)$, so $f$ has order $p$. Also, $\gamma$ normalizes $B$, so $f$ preserves $V$ and the decomposition $\R^n=V\oplus V^{\perp}$. Write $f=f_1\oplus f_2$ for the action on $V\oplus V^{\perp}$, and write $u=(u_1,u_2)\in V\oplus V^{\perp}$. Since $f_2$ is an isometry of $V^{\perp}\cong\R$, $f_2=\pm1$. It suffices to show $f_2=1$ because then $\gamma$ acts on $V\oplus V^\perp$ by
\begin{equation}\label{eqn:monodromy2}\gamma(x,t)=(f_1(x)+u_1, t+u_2)\end{equation}
and this descends to an action on $\R^n/B\cong T^{n-1}\times\R$ of the form (\ref{eqn:monodromy}). 

Suppose for a contradiction that $f_2=-1$. Since $f$ has order $p$, this is only possible when $p=2$. Then the (induced) action of $\gamma$ on $T^{n-1}\times\R$ acts by a reflection in the $\R$ direction ($\gamma$ preserves the foliation $T^{n-1}\times\{t\}$, so this action on $\R$ is well-defined). But then the quotient $N=\big(T^{n-1}\times\R\big)/\langle\gamma\rangle$ is not compact, which is a contradiction. 

{\it Step $3$.} Finally we explain why we can assume the monodromy of $T^{n-1}\to N\to S^1$ has order $p$. From the equation (\ref{eqn:monodromy2}), the monodromy is the map $x\mapsto f_1(x)+u_1$ on $V/B$. This map is isotopic to $x\mapsto f_1(x)$ since $x\mapsto x+u_1$ is a rotation on $T^{n-1}\cong V/B$ and rotations are isotopic to the identity. Since $f_1$ has order $p$, this completes the proof. 
  
\end{proof}

\begin{rmk}
Our proof does not work if $G=\Z/d\Z$ when $d$ is not prime. One reason is that when $d$ is not prime there are non-split extensions $1\to\Z\to\Lambda\to\Z/d\Z\to1$ with $\Lambda\neq\Z$. In this case it seems the only possible conclusion would be that $N$ is a mapping torus $N_1\to N\to S^1$ where $N_1$ is a $(n-1)$-dimensional flat manifold. 
\end{rmk}

\begin{prop}[Flat mapping torus is parallelizable]\label{prop:parallelizable}
Let $M_f$ be a flat manifold that has the structure of a mapping torus $T^{n-1}\rightarrow M_f\rightarrow S^1$ whose monodromy $f$ is orientation-preserving and has finite order. Then $M_f$ is parallelizable. 
\end{prop}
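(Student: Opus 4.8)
The plan is to trivialize the tangent bundle of $M_f$ by exhibiting an explicit framing, using the mapping torus structure and the fact that $T^{n-1}$ is parallelizable in a monodromy-equivariant way. Write $M_f=(T^{n-1}\times[0,1])/(x,1)\sim(f(x),0)$. Since $TM_f$ restricted to a fiber $T^{n-1}$ is $TT^{n-1}\oplus\varepsilon^1$ (the $\varepsilon^1$ coming from the $S^1$-direction), and $TT^{n-1}$ is canonically trivial as $T^{n-1}=\R^{n-1}/\Z^{n-1}$, the bundle $TM_f$ is obtained by gluing the trivial bundle $\underline{\R^n}$ over $T^{n-1}\times[0,1]$ to itself via the clutching function $T^{n-1}\to\GL_n(\R)$ given at $x$ by $Df_x\oplus 1$. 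Because $f$ is an affine map of $T^{n-1}$ induced by an element $\bar f\in\GL_{n-1}(\Z)$ (Bieberbach: the monodromy of a flat mapping torus is affine, and isotopic to its linear part as in Step 3 of Proposition \ref{prop:mapping-torus}), the derivative $Df_x$ is the \emph{constant} matrix $\bar f\in\SO(n-1)$ after a suitable choice of flat metric — here I use that $f$ is orientation-preserving and of finite order, so it is conjugate in $\GL_{n-1}(\R)$ into $\SO(n-1)$, i.e.\ it sits in a maximal torus of rotations. Thus the clutching function is the constant loop at $\bar f\oplus 1\in\SO(n)$.

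The key point is then purely about bundles over $S^1$: a vector bundle over $S^1$ with clutching function a \emph{constant} element $A\in\GL_n^+(\R)$ is trivial, because $\GL_n^+(\R)$ is connected, so $A$ is isotopic to the identity and the clutched bundle is isomorphic to the one with trivial clutching function. More precisely, the bundle $TM_f$ is the pullback along $M_f\to S^1$ of nothing — rather, I should argue fiberwise over the base circle: over $S^1$ we have the family of tori, and $TM_f\cong p^*(\text{bundle over }S^1)\oplus(\text{vertical bundle})$ where $p:M_f\to S^1$; the vertical tangent bundle along the fibers is the mapping torus of the constant clutching $\bar f$ acting on $TT^{n-1}=T^{n-1}\times\R^{n-1}$, which is $(T^{n-1}\times[0,1]\times\R^{n-1})/((x,1,v)\sim(f(x),0,\bar f v))$. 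Choosing a path $A_t$ in $\SO(n-1)$ from $\mathrm{id}$ to $\bar f$ (possible since $\bar f$ lies in a connected group and has a logarithm, being in $\SO(n-1)$), the bundle map $(x,t,v)\mapsto(x,t,A_t^{-1}v)$ over $\mathrm{id}_{M_f}$ (after also twisting the base coordinate) identifies this with $(T^{n-1}\times[0,1]\times\R^{n-1})/((x,1,v)\sim(f(x),0,v))$, i.e.\ with the pullback to $M_f$ of the trivial bundle, hence $TT^{n-1}$-part is trivial. Adding back the $1$-dimensional base direction $p^*TS^1$, which is trivial since $S^1$ is parallelizable, gives $TM_f\cong\underline{\R^n}$.

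The main obstacle — and the step that needs genuine care rather than hand-waving — is the claim that the monodromy $f$ can be taken to be a \emph{linear} orientation-preserving finite-order map of $T^{n-1}$ with derivative a constant matrix in $\SO(n-1)$. One has to invoke that $M_f$ is \emph{flat}: the holonomy representation and Bieberbach's theorem force $\pi_1(M_f)$ to be a crystallographic group, and the subgroup $\pi_1(T^{n-1})=\Z^{n-1}$ together with the extension data shows $f$ is realized by an affine diffeomorphism $x\mapsto \bar f x + c$ with $\bar f\in\GL_{n-1}(\Z)$; since $f$ has finite order $\bar f$ does too, so $\bar f$ is conjugate in $\GL_{n-1}(\R)$ to an element of $O(n-1)$, and orientation-preserving gives $\SO(n-1)$; after replacing the translation $c$ by an isotopy (rotations are isotopic to the identity, exactly as in Step 3 of Proposition \ref{prop:mapping-torus}) we may assume $c=0$. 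One should also double-check that changing $f$ within its isotopy class does not change the diffeomorphism type of $M_f$ — this is standard for mapping tori. Everything else is the elementary observation that $\pi_0(\SO(n))=\ast$ so constant clutching functions give trivial bundles, and that parallelizability is a statement about $TM_f$, which splits as vertical $\oplus$ horizontal with both summands trivial.
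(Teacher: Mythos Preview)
Your proof is correct and takes essentially the same approach as the paper: both arguments reduce to the observation that the monodromy has constant derivative $\bar f\in\SO(n-1)$, and then use a path $A_t$ from $\mathrm{id}$ to $\bar f$ in the connected group $\SO(n-1)$ to untwist the tangent bundle. The paper phrases this as constructing a $\Gamma$-invariant frame field on the universal cover $\R^{n-1}\times\R$ (constant on each slice $\R^{n-1}\times\{t\}$, rotated by $A_t$ as $t$ varies), while you phrase it in clutching-function language; these are the same construction viewed from two sides.
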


A version of this is proved by \cite{thorpe} with a different assumption. We give an alternate argument.

\begin{proof}[Proof of Proposition \ref{prop:parallelizable}]
To show $M_f$ is parallelizable, it suffices to construct an $n$-frame field on $\widetilde M_f\cong\R^n$ that is $\Gamma$-invariant, where $\Gamma=\pi_1(M_f)$. Write $\Gamma=\Z^{n-1}\rtimes_{f_*}\Z$, and decompose $\R^n$ accordingly as $\R^n=\R^{n-1}\times\R$. 

The group $\Gamma$ has a generating set $\gamma_1,\ldots,\gamma_{n-1},\eta$, where $\langle\gamma_1,\ldots,\gamma_{n-1}\rangle\cong\Z^{n-1}$ acts by translations of $\R^{n-1}\times\R$ that are trivial in the second factor, and $\eta$ acts on $(x,t)\in\R^{n-1}\times\R$ by $\eta(x,t)=(f_*(x)+\beta,t+\frac{1}{d})$, where $(\beta,f_*)\in\R^{n-1}\rtimes \SO(n-1)$.  

Define an orthonormal $n$-frame field on $\R^n$ as follows. First define a frame field along $\R^{n-1}\times0$ by choosing an orthonormal frame at the origin, and moving it along $\R^{n-1}$ by parallel transport. Choose this frame compatible with the decomposition $\R^n=\R^{n-1}\times\R$ (i.e.\ for the frame at the origin, $n-1$ vectors belong to the subspace $\R^{n-1}$ and the last vector belongs to the subspace $\R$). 

Now let $\alpha_t$ be a path from the identity $I$ to $f_*$ in $\SO(n-1)$, defined for $t\in[0,1/d]$.  Define a frame on $\R^{n-1}\times\{t\}$ by acting by $\alpha_t$ on the framing of $\R^{n-1}\times\{0\}$.

This extends in an obvious way to a framing on $\R^{n-1}\times\R$ that is $\eta$-invariant. The resulting framing is $\gamma_i$-invariant for each $i$, since by construction it is constant on $\R^{n-1}\times\{t\}$ for each $t\in\mathbb R$. Since $\Gamma$ is generated by $\eta$ and the $\gamma_i$, the framing is $\Gamma$ invariant. 
\end{proof}

\section{Actions on exotic tori (Theorem \ref{thm:tori})}\label{sec:tori}
In this section we prove Theorem \ref{thm:tori}. 
We begin with a brief sketch of the argument, and then give the details in the following subsections. 

Fix $\Sigma\in\Theta_n$ and a suppose that $G\cong\Z/p\Z$ acts freely on $T^n\#\Sigma$ by orientation-preserving diffeomorphisms. As $T^n$ is a flat manifold, Proposition \ref{prop:rigidity} gives an action $G\curvearrowright T^n$ such that $T^n\#\Sigma$ is in the image of a homomorphism
\[\pi^*:[Q,\TopO]\rightarrow[T^n,\TopO],\]
induced by the quotient $\pi:T^n\to T^n/G=:Q$. 

We show that there are isomorphisms
\begin{equation}\label{eqn:compatible-splitting}\begin{xy}
(-25,0)*+{[T^n,\TopO]}="A";
(13,0)*+{[S^n,\TopO]\>\>\>\oplus\>\>\>A}="B";
(-25,-15)*+{[Q,\TopO]}="C";
(13,-15)*+{[S^n,\TopO]\>\>\>\oplus\>\>\>\bar{A}}="D";
(-10,0)*+{\cong}="E";
(-10,-15)*+{\cong}="F";
{\ar "C";"A"}?*!/_3mm/{\pi^*};
\end{xy}\end{equation}
where $A$ and $\bar{A}$ are finite abelian groups. With respect to this splitting, we show that the homomorphism $\pi^*$ is diagonal; see Corollary \ref{cor:splitting}. 

Given this, the proof is completed as follows. With respect the isomorphisms in (\ref{eqn:compatible-splitting}), we write $T^n\#\Sigma$ as $(x,0)\in[S^n,\TopO]\oplus A$. From Proposition \ref{prop:rigidity}, we have $(x,0)=\pi^*(y,a)$. Using that $\pi^*$ is diagonal, we deduce that $\pi^*(0,a)=0$ and $y\neq0$; furthermore, since $\pi$ is a degree-$p$ covering map, $\pi^*(y,0)=(py,0)$. This implies the divisibility property. 

\subsection{Compatible splitting of the top cell} We now provide details for the existence of a ``diagonal'' homomorphism as in (\ref{eqn:compatible-splitting}).

For an open embedding $e:X\hookrightarrow Y$ of manifolds, we denote $e':Y'\rightarrow X'$ the induced map of 1-point compactifications. Recall that a smooth $n$-manifold $X$ is \textit{stably parallelizable} if it can be smoothly embedded in $\R^{n+k}$, for some $k\geq 1$, with trivial normal bundle. 

The following lemma is well known. We give a proof, which will be helpful in preparation for the Proposition \ref{prop:compatible-splitting}. 

\begin{lem}[Splitting the top cell]\label{lem:splitting}
Let $W$ be a stably parallelizable closed $n$-manifold. Then for $k\ge n$, there is a homotopy equivalence $ S^{n+k}\vee Z_W\to \Sigma^kW$, where $Z_W$ is a finite $CW$-complex of dimension $< n+k$. 
\end{lem}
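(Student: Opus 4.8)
The plan is to realize the top cell splitting geometrically by choosing a good embedding of $W$. Since $W$ is stably parallelizable, I would first fix a smooth embedding $e: W \hookrightarrow \R^{n+k}$ with trivial normal bundle, for $k \ge n$; such an embedding exists by Whitney's theorem once $k$ is large enough, and stable parallelizability guarantees we can arrange the normal bundle to be trivial (after possibly increasing $k$, which is harmless here). Compactifying, $e$ induces a map of one-point compactifications $e': S^{n+k} \to W'$, where $W'$ is the Thom space of the (trivial) normal bundle $\nu$ of $W$ in $\R^{n+k}$. Because $\nu$ is trivial of rank $k$, its Thom space is $W' \simeq \Sigma^k(W_+) \simeq \Sigma^k W \vee S^k$; the summand $S^k$ can be absorbed for $k < n$ into the low-dimensional part, but more to the point, $\Sigma^k(W_+) \simeq \Sigma^k W \vee S^k$ and we are interested in $\Sigma^k W$ itself, so I would instead work with the reduced construction: collapsing a disk in $W$ and applying the Pontryagin--Thom collapse gives a map $S^{n+k} \to \Sigma^k W$.

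The key step is to identify the composite $S^{n+k} \xrightarrow{e'} (\text{Thom space}) \to \Sigma^k W$ with a map that hits the top cell with degree one. Concretely, the Thom collapse map $S^{n+k} \to \mathrm{Th}(\nu)$ composed with the map $\mathrm{Th}(\nu) \to \mathrm{Th}(\nu|_{D^n}) \cong S^{n+k}$ (collapsing the complement of a disk) is degree one, which is the standard fact that $W$ has stable normal invariant compatible with its fundamental class. Dualizing / rearranging, I would produce a map $\phi: S^{n+k} \to \Sigma^k W$ whose composition with the collapse $\Sigma^k W \to \Sigma^k(W/W^{(n-1)}) = \Sigma^k S^n = S^{n+k}$ is a homotopy equivalence (degree $\pm 1$), using that $W$ is a closed oriented $n$-manifold so $W/W^{(n-1)} \simeq S^n$. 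Then $\phi$ together with the inclusion $Z_W := \Sigma^k(W^{(n-1)}) \hookrightarrow \Sigma^k W$ — or rather a map realizing the $(n-1)$-skeleton — assembles into a map $S^{n+k} \vee Z_W \to \Sigma^k W$. Here $Z_W$ is a finite CW complex of dimension $< n + k$: indeed $\dim Z_W \le (n-1) + k < n + k$.

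To see this assembled map is a homotopy equivalence I would check it induces an isomorphism on homology and invoke Whitehead's theorem (all spaces in sight are simply connected once $k \ge 1$, since suspensions are simply connected). On homology: $H_*(\Sigma^k W) \cong \tilde H_{*-k}(W)$ is $H_n(W) = \Z$ in top degree $n+k$, and equals $\tilde H_{*-k}(W^{(n-1)})$ in all lower degrees because the cellular chain complex of $W$ below degree $n$ agrees with that of the $(n-1)$-skeleton. The map from $S^{n+k}$ handles the top class by the degree-one property established above, and the map from $Z_W = \Sigma^k W^{(n-1)}$ (built from a CW structure on $W$ with a single top cell) handles everything below; the wedge splitting of homology then matches. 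I would need to be slightly careful that the two pieces do not interfere — i.e., that the composite $S^{n+k} \to \Sigma^k W \to \Sigma^k(W/W^{(n-1)})$ being degree one lets us split off the top cell cleanly — but this is exactly the cofiber sequence $\Sigma^k W^{(n-1)} \to \Sigma^k W \to S^{n+k}$ together with a section of the quotient map provided by $\phi$, and a section of a cofiber sequence of simply connected spaces yields a wedge decomposition.

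\textbf{Main obstacle.} The genuinely substantive point — and the one the paper presumably wants for Proposition~\ref{prop:compatible-splitting} — is constructing the section $\phi: S^{n+k} \to \Sigma^k W$ of the collapse map, i.e.\ splitting off the top cell. This is where stable parallelizability is essential: it is what makes the normal bundle trivial so that the Pontryagin--Thom collapse lands in an actual suspension $\Sigma^k W$ rather than a twisted Thom space, and it is the geometric input that the author will want to refine (by choosing the embedding equivariantly) in the proof of Proposition~\ref{prop:compatible-splitting}. Everything else is formal: the dimension count on $Z_W$, simple-connectivity of suspensions, and the Whitehead/homology verification.
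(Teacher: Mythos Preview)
Your proposal is correct and follows essentially the same route as the paper: both use a framed Whitney embedding $W\times D^k\hookrightarrow\R^{n+k}$ and the Pontryagin--Thom collapse to build a section $S^{n+k}\to\Sigma^kW$ of the top-cell collapse $q:\Sigma^kW\to S^{n+k}$, then take $Z_W$ to be the $(n+k-1)$-skeleton $\Sigma^k(W^{(n-1)})$ and verify that the resulting map $S^{n+k}\vee Z_W\to\Sigma^kW$ is a homology isomorphism, hence a homotopy equivalence by Whitehead. Your identification of the geometric section as the substantive step (and the one to be made equivariant for Proposition~\ref{prop:compatible-splitting}) is exactly right.
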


\begin{proof}
Give $W$ a cell structure with a single $n$-cell, and give $\Sigma^kW$ the induced cell structure, which has a single $(n+k)$-cell. Let $Z=(\Sigma^kW)^{(n+k-1)}$ denote the $(n+k-1)$-skeleton. There is a cofiber sequence 
\[Z\xrightarrow{i} \Sigma^kW\xrightarrow{q} S^{n+k}\]

Since $W$ is stably parallelizable, there is a framed Whitney embedding $j:W\times D^k\hookrightarrow\R^{n+k}$. The map 
\[p:S^{n+k}\xrightarrow{j'}\Sigma^k(W_+)\rightarrow\Sigma^kW\] is a right inverse to the map $q$ (up to homotopy) since the composition $q\circ p:S^{n+k}\rightarrow S^{n+k}$ has degree 1. 

{\it Claim.} The map $p\vee i: S^{n+k}\vee Z\rightarrow \Sigma^kW$ is a homotopy equivalence. 

{\it Proof of Claim.} Since the domain and codomain are simply connected, it suffices to show that $p\vee i$ is a homology equivalence by Whitehead's theorem \cite[Cor.\ 4.33]{hatcher}. It is easy to see that $p\vee i$ induces an isomorphism on $H_\ell$ for $\ell\le n+k-2$ since $Z$ is the $(n+k-1)$ skeleton; see e.g.\ \cite[Lem.\ 2.34(c)]{hatcher}. It remains to treat the cases $\ell=n+k-1$ and $\ell=n+k$. 

For $\ell=n+k$, the composition $S^{n+k}\vee Z\xrightarrow{p\vee i}\Sigma^kW\xrightarrow{q} S^{n+k}$ induces an isomorphism on $H_{n+k}$ (since $q\circ p$ has degree 1), and since each of these spaces has $H_{n+k}=\Z$, it follows that $(p\vee i)_*:H_{n+k}(S^{n+k}\vee Z)\rightarrow H_{n+k}(\Sigma^kW)$ is an isomorphism (and also that $q_*:H_{n+k}(\Sigma^kW)\rightarrow H_{n+k}(S^{n+k})$ is an isomorphism). 

For $\ell=n+k-1$, it suffices to show that $i_*:H_{n+k-1}(Z)\rightarrow H_{n+k-1}(\Sigma^kW)$ is injective. Considering the long-exact sequence of the pair $(\Sigma^kW,Z)$, it is equivalent to show that the homomorphism $H_{n+k}(\Sigma^kW)\rightarrow H_{n+k}(\Sigma^kW,Z)$ is surjective. This homomorphism can be identified with $q_*:H_{n+k}(\Sigma^kW)\rightarrow H_{n+k}(\Sigma^kW/Z)$, which we observed is an isomorphism in the preceding paragraph. 

This proves the claim, and finishes the proof of the lemma. 
\end{proof}

\begin{prop}[Compatible splitting]\label{prop:compatible-splitting}
Let $\pi:T^n\rightarrow Q$ be the quotient by a free action of $G=\Z/p\Z\curvearrowright T^n$. For $k\ge n$ there exists splittings $\Sigma^kT^n\simeq S^{n+k}\vee Z_{T^n}$ and $\Sigma^kQ\simeq S^{n+k}\vee Z_{Q}$ as in Lemma \ref{lem:splitting} so that the map 
\begin{equation}\label{eqn:cross-term}S^{n+k}\hookrightarrow S^{n+k}\vee Z_{T^n}\xrightarrow{\simeq}\Sigma^kT^n\xrightarrow{\Sigma^k(\pi)}\Sigma^kQ\xrightarrow{\simeq} S^{n+k}\vee Z_Q\onto Z_Q,\end{equation}
where the fourth map is a homotopy inverse to the splitting of Lemma \ref{lem:splitting},
is homotopically trivial. 
\end{prop}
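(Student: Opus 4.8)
The plan is to make the homotopy equivalences of Lemma \ref{lem:splitting} for $T^n$ and for $Q$ come from the \emph{same} geometric data, namely a single framed Whitney embedding that is compatible with the covering $\pi$. Concretely, I would first construct an equivariant framed embedding of $T^n$. Since $G=\Z/p\Z$ acts freely and smoothly on $T^n$, the quotient $Q$ is a closed flat manifold; both $T^n$ and $Q$ are stably parallelizable (by Propositions \ref{prop:mapping-torus} and \ref{prop:parallelizable}, $Q$ is in fact parallelizable, and $T^n$ certainly is), so each admits a framed Whitney embedding in a large Euclidean space. The key point is to choose a $G$-equivariant framed embedding $j_Q \colon Q\times D^k \hookrightarrow \R^{n+k}$ — more precisely, pull back a framed embedding of $Q$ along $\pi$ to get a framed embedding $j_{T^n}\colon T^n\times D^k\hookrightarrow \R^{n+k}$ which on each $G$-orbit looks like $|G|$ disjoint parallel translates. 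The cleanest way: pick a framed embedding $\bar\jmath$ of $Q$ in $\R^{n+k}$; compose with the covering to get a (generically non-injective) immersion of $T^n$, then perturb the $G$-translates apart in an extra coordinate, keeping track of the framing. This gives framed embeddings of $T^n$ and of $Q$ whose associated Pontryagin–Thom collapse maps $p_{T^n}\colon S^{n+k}\to\Sigma^k T^n$ and $p_Q\colon S^{n+k}\to\Sigma^k Q$ fit into a commutative square with $\Sigma^k(\pi)$, up to the combinatorics of summing over the orbit.

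Next I would use these compatible collapse maps to build the two splittings of Lemma \ref{lem:splitting} simultaneously. Recall from the proof of Lemma \ref{lem:splitting} that the splitting of $\Sigma^k W$ is $p_W\vee i_W\colon S^{n+k}\vee Z_W\xrightarrow{\simeq}\Sigma^k W$, where $i_W$ is the inclusion of the $(n+k-1)$-skeleton and $p_W$ is the Pontryagin–Thom map for the chosen framed embedding. The composite in \eqref{eqn:cross-term} is then the class of
\[
S^{n+k}\xrightarrow{p_{T^n}}\Sigma^k T^n\xrightarrow{\Sigma^k\pi}\Sigma^k Q\xrightarrow{r_Q} Z_Q,
\]
where $r_Q$ is a homotopy inverse to $p_Q\vee i_Q$ followed by projection off the $S^{n+k}$-summand. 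Equivalently, after the splitting of $\Sigma^k Q$, the composite $\Sigma^k\pi\circ p_{T^n}$ has a component into $S^{n+k}$ and a component into $Z_Q$; I must show the $Z_Q$-component is null. The $S^{n+k}$-component is detected on top homology and, because $\pi$ has degree $p$, equals $p$ times the identity (this is exactly what gives the "diagonal, multiplication-by-$p$" behaviour used in Corollary \ref{cor:splitting} and in the proof of Theorem \ref{thm:tori}). So the real content is: with the embeddings chosen compatibly, $\Sigma^k\pi\circ p_{T^n}$ factors (up to homotopy) through $S^{n+k}\hookrightarrow S^{n+k}\vee Z_Q$, i.e. through the bottom cell.

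To see the factorization, I would argue Pontryagin–Thom-theoretically. The map $p_{T^n}$ is Thom-collapse onto a tubular neighborhood of $T^n$ coming from $j_{T^n}$; post-composing with $\Sigma^k\pi$ collapses the neighborhood further using the covering. Because $j_{T^n}$ was built as $p$ disjointly-perturbed parallel copies of $\bar\jmath$, the composite $\Sigma^k\pi\circ p_{T^n}$ is homotopic to the fold of $p$ copies of $p_Q$ — and the fold of $p$ copies of a map to $\Sigma^k Q$, viewed through the splitting $\Sigma^k Q\simeq S^{n+k}\vee Z_Q$, lands (up to homotopy) in the $S^{n+k}$ wedge summand, since $p_Q$ itself does by construction and folding copies of a based map $S^{n+k}\to S^{n+k}\vee Z_Q$ that lands in $S^{n+k}$ still lands in $S^{n+k}$. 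Thus the $Z_Q$-component vanishes. (If the geometric disjointification is done carefully, one gets exact commutativity and need not even invoke folding; but the folding description is the conceptually robust one.)

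The main obstacle I anticipate is the \textbf{equivariant/compatible embedding step}: producing a framed embedding of $T^n$ whose restriction to each $G$-orbit is a controlled set of parallel translates, with the framings matching up, so that $\Sigma^k\pi\circ p_{T^n}$ literally becomes a fold of copies of $p_Q$ rather than merely something that agrees with it on homology. One has to be careful that perturbing the translates apart does not wreintroduce a nontrivial component into $Z_Q$ — i.e. the perturbation must be performed "within the bottom cell's worth of room", e.g. by translating in a single new coordinate direction by amounts $0,\varepsilon,2\varepsilon,\dots,(p-1)\varepsilon$. This is where the proof sketch in the paper presumably invokes "carefully constructing an equivariant Whitney embedding of $T^n$"; everything downstream (the homology computations, the identification of the $S^{n+k}$-component with multiplication by $p$, the appeal to Whitehead's theorem) is routine once that embedding is in hand.
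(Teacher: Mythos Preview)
Your overall strategy is correct and matches the paper: build compatible framed embeddings of $T^n$ and $Q$ so that the Pontryagin--Thom collapse maps $p_{T^n}$ and $p_Q$ fit into a commutative square with $\Sigma^k(\pi)$, after which the $Z_Q$-component of (\ref{eqn:cross-term}) vanishes because $p_Q$ already lands in the $S^{n+k}$ summand.

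The gap is in your proposed construction of the compatible embedding. You suggest embedding $T^n$ as ``$p$ disjointly-perturbed parallel copies of $\bar\jmath$'', separating the sheets in an extra coordinate by fixed amounts $0,\varepsilon,\dots,(p-1)\varepsilon$. This cannot work: $T^n$ is connected, so its image cannot consist of $p$ disjoint copies of $Q$; equivalently, the cover $\pi:T^n\to Q$ is nontrivial and admits no global section, so there is no consistent labeling of the $p$ sheets by $\{0,1,\dots,p-1\}$ with which to assign the displacements. Any genuine perturbation of the immersion $\bar\jmath\circ\pi$ to an embedding will mix the sheets as one moves around $Q$, and then your ``fold of $p$ copies of $p_Q$'' identity is no longer justified --- the monodromy of the cover is exactly the obstruction.

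The paper's construction is genuinely different and uses more structure. By Proposition~\ref{prop:mapping-torus}, $Q$ is a mapping torus $T^{n-1}\to Q\to S^1$ with order-$p$ monodromy $f$. One embeds $T^{n-1}$ $\langle f\rangle$-equivariantly in a representation $V$, chooses a path $f_t$ from $\id$ to $f$ in $\SO(V)$, and embeds $Q=M_f$ in $V\times\C$ by winding once around the origin. The cover $\pi$ is then realized \emph{ambiently} by the branched cover $\phi:(v,z)\mapsto(v,z^p)$ of $V\times\C$: the preimage of $Q$ under $\phi$ is $M_{f^p}=T^n$. Crucially, $\phi$ is an honest self-map of Euclidean space, so it compactifies to a map $\phi':S^{n+k}\to S^{n+k}$ making the square
\[
\begin{xy}
(0,0)*+{S^{n+k}}="A";
(25,0)*+{\Sigma^kT^n}="B";
(0,-15)*+{S^{n+k}}="C";
(25,-15)*+{\Sigma^kQ}="D";
{\ar "A";"B"}?*!/_3mm/{p_{T^n}};
{\ar "A";"C"}?*!/^3mm/{\phi'};
{\ar "B";"D"}?*!/_3mm/{\Sigma^k(\pi)};
{\ar "C";"D"}?*!/_3mm/{p_Q};
\end{xy}
\]
commute on the nose. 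The branched cover is what replaces your ``separate the sheets by translation'': it absorbs the monodromy of the covering into the map $z\mapsto z^p$, which has no analogue in a pure translation construction.
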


\begin{proof}[Proof of Proposition \ref{prop:compatible-splitting}]

By Proposition \ref{prop:mapping-torus}, $Q$ has the structure of a mapping torus $T^{n-1}\to Q\to S^1$ with order-$p$ monodromy $f$. Set $H=\langle f\rangle$. 

Fix an equivariant embedding $T^{n-1}\hookrightarrow V$ into a $\R[H]$-module $V$; this is possible by an equivariant version of the Whitney embedding theorem \cite{palais,mostow}. Without loss of generality, we can assume that the action $f\curvearrowright V$ is orientation-preserving (if not, we can replace $V$ by $V\oplus V$). 

Let $f_t$ be a homotopy in $\SO(V)$ from $f$ to the identity. We assume this homotopy takes place for $t\in(0,\infty)$, with $f_t=\id$ for $t\le 1$ and $f_t=f$ for $t\ge2$.

The map $F(v,t)=(f_t(v),t)$ is a homeomorphism of $V\times(0,\infty)$. Let $M_F$ be the mapping torus of $F$. We can view $M_F$ as a bundle 
\[V\rightarrow M_F\rightarrow \R^2\setminus\{0\}.\] Since the bundle is trivial near $0$, we can extend $M_F$ to a bundle $V\rightarrow \what M_F\rightarrow\R^2$. This latter bundle is trivial $\what M_F\cong V\times\R^2$ since $\R^2$ is contractible. By construction, $Q=M_f$ embeds in $M_F\subset \what M_F$. (Explicitly, choose the copy of $T^{n-1}$ in $V\times\{2\}$. Then the image of $T^{n-1}\times\{2\}\times[0,1]$ in $M_F=\big(V\times(0,\infty)\times[0,1]\big)/\sim$ is an embedding of $M_f$ in $M_F$.)

Now we lift to an equivariant embedding of $T^n$. Identify $\R^2\cong\C$, and consider the $p$-fold cover 
\[\phi:V\times\C\rightarrow V\times\C\>\>\>\text{ given by $\>\>\>(v,z)\mapsto(v,z^p)$.}\] 
This is a regular cover, branched over $V\times\{0\}$ with deck group $G=\Z/p\Z$. The pre-image of $M_f$ under this cover is $M_{f^p}=M_{\id}=T^n$. Thus we have an embedding $T^n\hookrightarrow V\times\R^2$ that is equivariant with respect to the deck group actions of the coverings $\pi:T^n\rightarrow M_f$ and $\phi:V\times\R^2\rightarrow V\times\R^2$, and the quotient by these actions yields an embedding $Q=M_f\hookrightarrow V\times\R^2$. Since $T^n$ and $Q$ are stably parallelizable by Proposition \ref{prop:parallelizable}, the normal bundles of $T^n\subset V\times\R^2$ and $Q\subset V\times\R^2$ are trivial. (For this it's possible we need to first take product with $\R^m$ (with trivial $G$ action); c.f.\ \cite[Lem.\ 3.3]{kervaire-milnor}.) 
Thus we have a commutative diagram:  
\beq
\xymatrix{
T^n\times D^k\ar@{^{(}->}[r]^-{j_{T^n}}\ar[d]_{\pi} & V\times\R^2\ar[d]^{\phi}\\
Q\times D^k\ar@{^{(}->}[r]^-{j_{Q}} & V\times\R^2
}
\eeq

We use collapse maps of the embeddings $j_{T^n},j_Q$ to get a commutative diagram 
\begin{equation}\label{eqn:compatible-splitting2}
\xymatrix{
S^{n+k}\ar[r]^{p_{T^n}}\ar[d]_{\phi'} & \Sigma^kT^n\ar[d]^{\Sigma^k(\pi)}\\
S^{n+k}\ar[r]^{p_Q} &\Sigma^kQ
}
\end{equation}

By the proof of Lemma \ref{lem:splitting}, for $W=T^n$ or $Q$, the map $p_W:S^{n+k}\rightarrow\Sigma^kW$ splits the top cell, i.e.\ there is a homotopy equivalence so $\Sigma^kW\simeq S^{n+k}\vee Z_W$ so that the composition
\begin{equation}\label{eqn:homotopically-trivial}S^{n+k}\xrightarrow{p_W}\Sigma^kW\simeq S^{n+k}\vee Z_W\onto Z_W\end{equation} is homotopically trivial. Then the composition (\ref{eqn:cross-term}) is homotopically trivial because it factors through (\ref{eqn:homotopically-trivial}) by virtue of diagram (\ref{eqn:compatible-splitting2}). 
\end{proof}

We now use Proposition \ref{prop:compatible-splitting} to prove that there is a ``compatible splitting" of 
\[\pi^*:[Q,\TopO]\rightarrow[T^n,\TopO].\] Let $\pi:T^n\rightarrow Q$ be the quotient by a free action of $G=\Z/p\Z$. Let $u_Q:D^n\hookrightarrow Q$ be an embedded disk, chosen sufficiently small so that it lifts to an embedding $\tilde u:\sqcup_pD^n\hookrightarrow T^n$. Let $u_{T^n}:D^n\hookrightarrow T^n$ be an embedded disk that contains the image of $\tilde u$, so $\tilde u$ factors as $\sqcup_p D^n\xrightarrow{j} D^n\xrightarrow{u_{T^n}} T^n$. This leads to the following commutative diagram of spaces. 
\[\begin{xy}
(40,0)*+{S^n}="A";
(0,0)*+{T^n}="B";
(40,-30)*+{S^n}="C";
(40,-15)*+{\bigvee_pS^n}="D";
(0,-30)*+{Q}="E";
{\ar"B";"A"}?*!/_3mm/{u_{T^n}'};
{\ar "B";"D"}?*!/^3mm/{\tilde u'};
{\ar "A";"D"}?*!/_3mm/{j'};
{\ar "D";"C"}?*!/_3mm/{\Delta};
{\ar "B";"E"}?*!/^3mm/{\pi};
{\ar "E";"C"}?*!/^3mm/{u_Q'};
\end{xy}\]
Here the map $\Delta$ is the identity map on each $S^n$. The composition $\Delta\circ j':S^n\rightarrow S^n$ has degree $p$. Suspending this diagram and combining with Diagram (\ref{eqn:compatible-splitting2}), we obtain
\begin{equation}\label{eqn:splitting-diagram}
\xymatrix{
S^{n+k}\ar[r]^{p_{T^n}}\ar[d]_{\mathrm{deg}=p} & 
\Sigma^kT^n_+\ar[r]^{u'_{T^n}}\ar[d] & S^{n+k} \ar[d]_{}\\
S^{n+k}\ar[r]^{p_{Q}} & \Sigma^kQ_+\ar[r]^{u'_{Q}} & S^{n+k}
}
\end{equation}

The composition $S^{n+k}\rightarrow S^{n+k}$ in each row is a degree-1 map. Recalling that $\TopO$ is an infinite loop space, let $Y$ be a space such that $\Omega^kY\simeq\TopO$. Apply $[-,Y]$ to Diagram (\ref{eqn:splitting-diagram}), and use the adjunction $[A,\Omega B]\cong[\Sigma A,B]$ to arrive at the following diagram. 
\beq
\xymatrix{
[S^n,\TopO]\ar[r]^{(u'_Q)^*} \ar[d]_{}&[Q,\TopO]\ar[r]^{p^*_Q}\ar[d]^{\pi^*} & [S^n,\TopO]\ar[d]^{\mu_p}\\
[S^n,\TopO]\ar[r]^{(u'_{T^n})^*} &[ T^n,\TopO]\ar[r]^{p^*_{T^n}} &[S^n,\TopO]
}
\eeq

Here $\mu_p$ is multiplication by $p$. We have proved the following corollary. 

\begin{cor}\label{cor:splitting}
The maps $(u'_Q)^*$ and $(u'_{T^n})^*$ are split injections, and there exist splittings $p_Q^*,p_{T^n}^*$ with the property that 
\begin{equation}\label{eqn:compatible}p_{T^n}^*\circ\pi^*=\mu_p\circ p_Q^*.\end{equation}
\end{cor}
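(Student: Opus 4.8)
The plan is to deduce Corollary~\ref{cor:splitting} formally from the diagram (\ref{eqn:splitting-diagram}): all of the geometric content has already been spent in Proposition~\ref{prop:compatible-splitting}, and what is left is to apply the contravariant functor $[-,Y]$ to (\ref{eqn:splitting-diagram}) --- where $Y$ is any space with $\TopO\simeq\Omega^kY$, which exists because $\TopO$ is an infinite loop space --- and then to translate the result back through the adjunction $[\Sigma^kX_+,Y]\cong[X,\TopO]$. I would first record two elementary facts about a self-map $\psi:S^{m}\to S^{m}$ with $m=n+k\ge1$: if $\deg\psi=1$ then $\psi$ is homotopic to the identity, hence $\psi^*=\id$ on $[S^{m},Y]$; and if $\deg\psi=d$ then $\psi^*$ is multiplication by $d$ on $[S^{m},Y]$, since $S^{m}$ is a suspension (hence a co-$H$-space), $\psi$ is homotopic to the composite of the iterated pinch map $S^{m}\to\bigvee_dS^{m}$ with the fold map, and therefore precomposition with $\psi$ is the $d$-fold sum in the group $[S^{m},Y]$. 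Both the left vertical $\phi'$ of (\ref{eqn:splitting-diagram}) (which has degree $p$) and, for $W=T^n$ and for $W=Q$, the row composite $u'_W\circ p_W:S^{n+k}\to S^{n+k}$ (which has degree $1$, as recorded just below (\ref{eqn:splitting-diagram})) are of this form.

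For the split-injection statement: applying $[-,Y]$ to the degree-$1$ map $u'_W\circ p_W$ gives $p_W^*\circ(u'_W)^*=(u'_W\circ p_W)^*=\id$ on $[S^{n+k},Y]\cong[S^n,\TopO]$, so $(u'_W)^*$ is a split injection with retraction $p_W^*$, for $W=T^n$ and $W=Q$. For the compatibility (\ref{eqn:compatible}): applying $[-,Y]$ to the commuting left-hand square of (\ref{eqn:splitting-diagram}), i.e.\ to the identity $\Sigma^k(\pi)\circ p_{T^n}=p_Q\circ\phi'$, yields $p_{T^n}^*\circ(\Sigma^k\pi)^*=(\phi')^*\circ p_Q^*$; under the adjunction $(\Sigma^k\pi)^*$ becomes $\pi^*:[Q,\TopO]\to[T^n,\TopO]$, while $(\phi')^*=\mu_p$ by the second fact above, and together these read $p_{T^n}^*\circ\pi^*=\mu_p\circ p_Q^*$, as claimed.

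I do not expect a genuine obstacle in this step. The only points requiring a modicum of care are the bookkeeping with disjoint basepoints in the adjunction (one uses $\Sigma(X_+)\simeq\Sigma X\vee S^1$; this is harmless since $\TopO$ is connected and an $H$-space, so based and unbased homotopy classes agree, as noted in \S\ref{sec:smoothing}) and the observation that $[S^{n+k},Y]$ is a genuine abelian group, so that ``$\deg\phi'=p$'' translates literally into ``$(\phi')^*$ is multiplication by $p$''. The substantive difficulty is entirely upstream: it is Proposition~\ref{prop:compatible-splitting} --- equivalently, the construction of the equivariant Whitney embedding making the square (\ref{eqn:compatible-splitting2}) commute and the cross-term (\ref{eqn:cross-term}) nullhomotopic --- that does the real work, and once that is available the present corollary is immediate.
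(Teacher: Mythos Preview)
Your proposal is correct and follows the same route as the paper: the paper's entire proof consists of applying $[-,Y]$ to diagram~(\ref{eqn:splitting-diagram}) and invoking the adjunction $[\Sigma^kX_+,Y]\cong[X,\TopO]$, which is exactly what you do (with the added care of justifying why degree-$d$ self-maps of spheres induce multiplication by~$d$ and noting the basepoint conventions). Nothing more is needed.
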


\subsection{Finishing the proof of Theorem \ref{thm:tori}}

In Proposition \ref{prop:rigidity}(ii), we showed that $T^n\#\Sigma=\pi^*(z)$ for some $z\in[Q,\TopO]$. Using Corollary \ref{cor:splitting}, we have 
\[\Sigma=p_{T^n}^*(T^n\#\Sigma)=p_{T^n}^*(\pi^*(z))=\mu_p(p_Q^*(z))\]
which shows that $\Sigma$ is divisible by $p$ in $\Theta_n=[S^n,\TopO]$, as desired. 
 
\section{Asymmetric smoothings (Theorem \ref{thm:asymmetric})}\label{sec:asymmetric} 

Recall from Proposition \ref{prop:rigidity} that if $M$ is a closed hyperbolic manifold and $G$ acts freely on $M\#\Sigma$, then the smoothing $M\#\Sigma$ is in the image of a certain homomorphism
\[\pi^*:[M/G,\TopO]\rightarrow[M,\TopO].\]
We would like to use this to conclude that $M$ satisfies the divisibility property, similar to Step 2 in the proof of Theorem \ref{thm:tori}. If we could show this, then it would suffice to find $M$ and $\Sigma$ with the property that $\Sigma$ is not divisible by $|G|$ for any nontrivial group $G<\Isom^+(M)$. 

Unfortunately, we don't know how to prove that hyperbolic manifolds satisfy the divisibility property in general (it seems difficult to produce a geometric construction that would yield a version of Proposition \ref{prop:compatible-splitting} in the hyperbolic case). Instead, as in the proof of Theorem \ref{thm:dim7}, we study $\pi^*$ using the Atiyah--Hirzebruch spectral sequence. Here the difficulty is (as usual) potentially nontrivial differentials and extension problems, but we show these issues can be avoided for a proper choice of $M,\Sigma$ and by localizing $\TopO$ at an odd prime.  

\begin{proof}[Proof of Theorem \ref{thm:asymmetric}]\

\subsection*{Step 1: the construction} Fix $n_0\geq 5$, $d\ge1$, and choose $n=4k-1\ge n_0$ and an odd prime $p$ such that the $p$-torsion subgroup of $bP_{n+1}$ is nontrivial and the $p$-torsion subgroup of $bP_{m+1}$ is trivial for $m<n$. This is possible because the set of primes that divide $|bP_{4k}|$ for some $k$ is infinite. For example, $|bP_{4k}|$ is divisible by $2^{2k-1}-1$ (see \cite[\S7]{kervaire-milnor}), and it is not difficult to show that if $s,t$ are relatively prime, then $2^s-1$ and $2^t-1$ are relatively prime. 

Let $\Z_{(p)}$ denote the set of rational numbers with denominator relatively prime to $p$. The group $(bP_{n+1})_{(p)}:=bP_{n+1}\otimes\Z_{(p)}$ is the $p$-torsion of $bP_{n+1}$. Since $bP_{n+1}$ is cyclic, $(bP_{n+1})_{(p)}\cong\Z/p^a\Z$ for some $a\ge1$.  Choose a generator $\Sigma\in (bP_{n+1})_{(p)}$. 

Next choose a closed oriented hyperbolic $n$-manifold $M$ such that (i) $\Isom^+(M)=\Isom(M)$, (ii) $\Isom(M)$ is a $p$-group where every element has order divisible by $p^a$, and (iii) $\Isom(M)$ acts freely on $M$. Such examples exist by the construction of Belolipetsky--Lubotzky \cite[Thm.\ 1.1]{belolipetsky-lubotzky}; see also \cite[Thm.\ 6]{BT-hyperbolic}.

\subsection*{Step 2: the computation.} Take $M$ and $\Sigma$ as in Step 1. We claim that $N=M\#\Sigma$ is asymmetric. Fix a finite order element $g\in\Diff^+(M\#\Sigma)$ and denote $G=\pair{g}$. Suppose for a contradiction that $g\neq\id_N$. By a result of Borel \cite{borel-isometry-aspherical}, the induced map $G\rightarrow\Out(\pi_1(N))\cong\Isom(M)$ is injective, so the order of $g$ is $p^b$ for some $b\ge a$.  

By Proposition \ref{prop:rigidity} , there is a degree $|G|$ covering map $\pi:M\rightarrow\ov M$ and $x\in[\ov M,\TopO]$ such that $\pi^*(x)=M\#\Sigma$, where $\pi^*:[\ov M,\TopO]\rightarrow[M,\TopO]$. 

To explain the remainder of the argument, we consider the following commutative diagram. 
\begin{equation}\label{eqn:hyperbolic-diagram}
\begin{xy}
(-70,0)*+{[\ov M,\TopO]}="A";
(-70,-15)*+{[\ov M,\TopO_{(p)}]}="B";
(-70,-30)*+{[\ov M,B]}="C";
(-30,0)*+{[M,\TopO]}="D";
(-30,-15)*+{[M,\TopO_{(p)}]}="E";
(-30,-30)*+{[M,B]}="F";
(10,0)*+{[S^n,\TopO]}="G";
(10,-15)*+{[S^n,\TopO_{(p)}]}="H";
(10,-30)*+{[S^n,B]}="I";
(26,0)*+{\cong\Theta_n}="J";
(34,-15)*+{\cong\Theta_n\otimes\Z_{(p)}}="K";
(30,-30)*+{\cong bP_{n+1}\otimes\Z_{(p)}}="L";
{\ar"A";"B"}?*!/_3mm/{};
{\ar "B";"C"}?*!/_3mm/{};
{\ar "D";"E"}?*!/^3mm/{};
{\ar "E";"F"}?*!/^3mm/{};
{\ar "G";"H"}?*!/^3mm/{};
{\ar "H";"I"}?*!/^3mm/{};
{\ar "A";"D"}?*!/_3mm/{\pi^*};
{\ar "B";"E"}?*!/^3mm/{};
{\ar "C";"F"}?*!/_3mm/{0};
{\ar "G";"D"}?*!/^3mm/{(i')^*};
{\ar "H";"E"}?*!/^3mm/{};
{\ar "I";"F"}?*!/^3mm/{\cong};
\end{xy}\end{equation}

In the top row, the map $i':M\rightarrow S^n$ is the collapse map induced by the inclusion of a disk $i:D^n\hookrightarrow M$. The vertical maps are induced by maps 
\[\TopO\rightarrow\TopO_{(p)}\to B\times C\onto B.\]
where the second arrow is a homotopy inverse to the equivalence of Theorem \ref{thm:local-splitting}. 

{\it Claim.} $[M,B]\cong H^n(M;\pi_n(B))\cong bP_{n+1}\otimes\Z_{(p)}$ and similarly for $\ov M$. 

{\it Proof of Claim.} We prove this using the Atiyah--Hirzebruch spectral sequence. As discussed in \S\ref{sec:smoothing}, this spectral sequence has $E_2$-page
\[E_2^{i,-j}=H^i(M;\pi_j(B)),\]
and converges to $\mathbb E^{i-j}(M)$, where $\mathbb E^*$ denotes the cohomology theory associated to the infinite loop space $B$. In particular, to determine, $[M,B]=\mathbb E^0(M)$, we focus on the terms $E_2^{i,-i}=H^i(M;\pi_i(B))$. By our choice of $p$, and from the fact that $M$ is a closed, oriented $n$-manifold, we have 
\[H^i(M;\pi_i(B))=\begin{cases}\Z/p^a\Z&i=n\\0&\text{else}\end{cases}\]
Furthermore, by construction $\pi_k(B)=bP_{n+1}\otimes\Z_{(p)}$ is 0 for $k<n$, so the term $E_2^{n,-n}$ receives no nontrivial differentials, and the claim follows. 

The induced map
\[H^n(\pi):H^n(\ov M;\pi_n(B))\cong\Z/p^a\Z\rightarrow\Z/p^a\Z\cong H^n(M;\pi_n(B))\] is multiplication by $\deg(M\xrightarrow{\pi}\ov M)=|G|$, so $H^n(\pi)$ is the zero map because $p^a$ divides $|G|$ by construction. This explains the arrow labeled ``0" in Diagram (\ref{eqn:hyperbolic-diagram}). 

Now we conclude. On the one hand, the image of $M\#\Sigma$ under $[M,\TopO]\rightarrow [M,B]$ is nonzero because $M\#\Sigma=(i')^*(\Sigma)$, and the ``right side" of Diagram (\ref{eqn:hyperbolic-diagram}) commutes. On the other hand, $M\#\Sigma$ is in the kernel of $[M,\TopO]\rightarrow[M,B]$ because $M\#\Sigma=\pi^*(x)$, and the ``left side" of Diagram (\ref{eqn:hyperbolic-diagram}) commutes. This contradiction implies that our finite order element $g\in\Diff(N)$ must have been trivial, and this completes the proof of Theorem \ref{thm:asymmetric}. 
\end{proof}

\bibliographystyle{amsalpha}
\bibliography{refs}

Mauricio Bustamante\\
Departamento de Matem\'aticas, Universidad Cat\'olica de Chile\\
\texttt{mauricio.bustamante@mat.uc.cl}

Bena Tshishiku\\
Department of Mathematics, Brown University\\ 
\texttt{bena\_tshishiku@brown.edu}

\end{document}